\newcommand{\beql}[1]{\begin{equation}\label{#1}}
\newcommand{\eeql}{\end{equation}}
\newcommand{\eqn}[1]{(\ref{#1})}
\newcommand{\R}{\mathbb{R}}
\newcommand{\pr}{\mathbb{P}}
\newcommand{\E}{\mathbb{E}}
\newcommand{\Z}{\mathbb{Z}}
\newcommand{\mb}[1]{\mbox{\boldmath $#1$}}
\newtheorem{thm}{Theorem}
\newtheorem{lem}[thm]{Lemma}
\newtheorem{prop}[thm]{Proposition}
\newtheorem{cor}[thm]{Corollary}
\newtheorem{assumption}[thm]{Assumption}
\begin{document}

\title{Multiclass multiserver queueing system in the Halfin-Whitt heavy traffic regime.
Asymptotics of the stationary distribution}

\author
{
David Gamarnik \\
Operations Research Center \\
and Sloan School of Management\\
100 Main Street\\
MIT, Cambridge, MA,  02139\\
\texttt{gamarnik@mit.edu}
\thanks{
Research supported in part by the NSF grants CMMI-0726733.}
\and
Alexander L. Stolyar \\
Bell Labs, Alcatel-Lucent\\
600 Mountain Ave., 2C-322\\
Murray Hill, NJ 07974 \\
\texttt{stolyar@research.bell-labs.com}
\thanks{The authors wish to thank the Newton Institute of Mathematics, Cambridge, UK, 
where this work was initiated and partly conducted, for their hospitality.}
}

\date{\today}

\maketitle

\begin{abstract}
We consider a heterogeneous queueing system consisting of one large pool of
$O(r)$
identical servers, where $r\rightarrow\infty$ is the scaling parameter.
The arriving
customers belong to one of several classes which determines the service times in the distributional
sense.
The system is heavily loaded in the Halfin-Whitt sense, namely the nominal
utilization is $1-a/\sqrt{r}$
where $a>0$ is the spare capacity parameter.
Our goal is to obtain bounds on the steady state performance metrics such as the number of customers waiting in the queue  $Q^r(\infty)$.
While there is a rich literature on deriving process level (transient) scaling limits for such
systems, the results for steady state are primarily limited to the single class case.

This paper is the first one to address the case of heterogeneity in the steady state regime.
Moreover, our results hold for any service policy which does not admit server idling
when there are customers waiting in the queue.
We assume that the interarrival and service times have exponential distribution,
and that customers of each class may abandon while waiting in
the queue at a certain rate (which may be zero).
We obtain upper bounds of the form $O(\sqrt{r})$ on both $Q^r(\infty)$
and the number of idle servers.
The bounds are uniform w.r.t. parameter $r$ and the service policy.
In particular, we show
that $\limsup_r \E \exp(\theta r^{-{1\over 2}}Q^r(\infty))<\infty$.
Therefore, the sequence $r^{-{1\over 2}}Q^r(\infty)$ is tight and has
a uniform exponential tail bound.
We further consider the system
with strictly positive abandonment rates,
and show that
in this case
every weak limit $\hat Q(\infty)$ of $r^{-{1\over 2}}Q^r(\infty)$
has a sub-Gaussian tail. Namely $\E[\exp(\theta (\hat Q(\infty))^2)]<\infty$,
for some $\theta>0$.
\end{abstract}

\section{Introduction}
\label{section:intro}
Recently we have witnessed an explosion of interest in large scale queueing systems operating
in the so-called Halfin-Whitt heavy traffic regime. Such systems are attractive for modeling
large scale service systems such as call/contact services
or computer farms (used in cloud computing), as they achieve high utilization
and low service delay at the same time~\cite{AksinArmonyMehrotra},\cite{BrownCallCenters},\cite{GansKooleMandelbaum}.
Most of the prior literature on the subject, including the paper by Halfin and Whitt~\cite{HalfinWhitt81}, which has
initiated the subject, focused on the single class case where customers with identical in distribution
service requirements are processed by a large pool of identical servers. This includes the earlier follow up papers such as
\cite{ReimanPuhalskii2000}, which focused on the case of exponential and phase-type distribution of model primitives,
as well as more recent works~\cite{Reed},\cite{ReimanPuhalskii2000},\cite{JelenkovicMandelbaumMomcilovic},\cite{MandelbaumMomcilovic},
\cite{GamarnikMomcilovicCallCenters},\cite{KangRamanan},\cite{KaspiRamanan},\cite{GamarnikGoldbergGGN}.

The multiclass models have been analyzed  as well~\cite{TezcanDaiOR},\cite{StolyarTezcan},\cite{AtaGurvich},\cite{AtarMandelbaumReiman},
\cite{AtarMandelbaumReimanAAP}, specifically in the context of finding asymptotically optimal control policies, but primarily in the transient as opposed to the steady state regime. In the multiclass environment it is assumed that the processing times may depend on the customer class. (Furthermore, in more general models with heterogenous servers,
the processing times may depend on customer class and server type combination.)
The lack of literature analyzing multiclass models in steady state is unfortunate, since  multiclass systems offer a richer set
of models capturing, for example, call centers with agent skill and customer requirement heterogeneity. The results obtained for
single class models for the steady state regimes, unfortunately, are not applicable in the multiclass setting. The original
paper by Halfin and Whitt~\cite{HalfinWhitt81} relies on the Erlang's formula for the $M/M/N$ queueing systems. \cite{ReimanPuhalskii2000}
only analyze the transient regime. The remaining aforementioned
papers~\cite{Reed},\cite{ReimanPuhalskii2000},\cite{JelenkovicMandelbaumMomcilovic},\cite{MandelbaumMomcilovic},
\cite{GamarnikMomcilovicCallCenters},\cite{KangRamanan},\cite{KaspiRamanan},\cite{GamarnikGoldbergGGN}
covering the steady state regime also use the single class assumption in an important way. Thus a new set of tools
need to be developed to address heterogenous models.

In this paper we consider a multiclass queueing system with $O(r)$ parallel
identical servers in
the Halfin-Whitt regime, where parameter $r\to\infty$ and the system (nominal) utilization
 scales as $1-O(1/\sqrt{r})$.
The multiplicity
of classes comes from heterogeneity of customer mean service times.
Namely, customers class $i$ have the mean service requirement $1/\mu_i$.
The arrival processes of different types are assumed to be independent Poisson
(of rate $O(r)$), and the service time distributions are assumed to be independent
exponential.
Class $i$ customers waiting in the queue abandon the system at rate $\nu_i\ge 0$,
also in the Markov fashion.
An arbitrary non-idling, state dependent (in a sense to be explained) customer service
(scheduling) policy is implemented.
Interestingly, the details of this policy are irrelevant for our results.
Moreover, it is allowed to have a different policy for different values of parameter $r$.
Our results establish, in particular, that
the steady state (random) queue length denoted by $Q^r(\infty)$ scales like $O(\sqrt{r})$
as $r\rightarrow\infty$.
To be precise, 
we show that for some fixed parameter $\theta>0$, 
$\limsup_{r\rightarrow\infty} \E\left[\exp\left(\theta r^{-{1\over 2}}Q^r(\infty)\right)\right]<\infty$, which in particular ensures that the sequence
of distribution of $(1/\sqrt{r})Q^r(\infty)$ is tight.
If in addition, the scheduling policy
admits a process level limit $\hat Q(t)$ of the processes $\hat Q^r(t)=(1/\sqrt{r})Q^r(t)$,
then our results imply that any weak limit
(along a subsequence) of
the sequence of stationary distributions of
$\hat Q^r$  is the stationary distribution of $\hat Q$;
in other words, we establish an interchange of heavy traffic and steady state limits.
Thus our paper continues an earlier stream of works establishing interchange of limits for queueing
networks~\cite{GamarnikZeevi},\cite{BudhirajaLee},\cite{GamarnikMomcilovicCallCenters},\cite{PieraMazumdarGuillemin},\cite{Katsuda}.

Additionally, for the case when all abandonment rates $\nu_i$ are strictly positive,
we provide a sub-Gaussian bound on the tail of $\hat Q(\infty)$, where $\hat Q(\infty)$
is any weak limit of $\hat Q^r(\infty)=(1/\sqrt{r})Q^r(\infty)$.
A lot of recent literature
focused on queueing models with abandonments~\cite{GarnettMandelbaumReiman},\cite{DaiHe},\cite{MandelbaumMomcilovicAbandonments}.
It was observed that at a process level the presence of abandonments
leads to a Ornstein-Uhlenbeck type behavior for the limiting process. Thus one would expect that the steady state measures (of the limit process)
should have a sub-Gaussian tail. This was explicitly conjectured in \cite{DaiHe}. In this
paper we qualitatively confirm the conjecture.
One might naively conjecture, that in parallel to our exponential-tail result
described earlier, one would have
$\limsup_r \E\left[\exp\left(\theta (\hat Q^r(\infty))^2\right)\right]<\infty$, for some small enough $\theta$.
But is does not hold even in the single-class model, and we provide a simple illustration why, later in the paper.
However, we do show that any weak limit $\hat Q(\infty)$ of the sequence
$\hat Q^r(\infty)$ satisfies $\E[\exp(\theta (\hat Q(\infty))^2)]<\infty$ for small enough $\theta>0$. Namely the weak
limits of the (scaled) queue length indeed have sub-Gaussian tails.

The remainder of the paper is organized as follows. The model is described and the main results are stated in the next section.
In Section~\ref{section:key bounds} we obtain some preliminary results.
Specifically, we obtain some useful bounds that follow from the lower
bounds derived by comparing with the $M/M/\infty$ model, and also
a monotonicity (with respect to abandonment rates) result.
Our first main result on the uniform exponential-tail bounds
is proved in Sections~\ref{section:mainresult1}
and \ref{section:mainresult13333}. Another main result --
the sub-Gaussian-tail bounds on the weak limits of stationary distributions,
in the case of non-zero abandonment rates --
is proved in Section~\ref{section:mainresult2}.

We close this section with some notational conventions. For any real number $b$, let $b^+=\max(b,0),b^-=-\min(b,0)$, $\lfloor b \rfloor$ be the largest integer not greater than $b$, and
$\lceil b \rceil$ be the smallest integer not less than $b$.
$\R,\R_+,\Z,\Z_+$ denote the set of real values, non-negative real values, integer values and non-negative integer values, respectively.
$\Rightarrow$ denotes convergence in distribution.
$I\lbrace\cdot\rbrace$ is the indicator function taking value $1$ when the event inside $\lbrace\cdot\rbrace$ takes place,
and $0$ otherwise.

\section{Model and results}
\label{section:model and results}

We consider a sequence of queueing systems, indexed by parameter $r=1,2,\ldots$,
increasing to infinity.
The system with index $r$ has $N^r=r+a\sqrt{r}$ identical severs, for some fixed $a>0$.
(To be precise, this quantity needs to be integer, so it should be,
for example, $\lceil r+a\sqrt{r}\rceil$.
This subtlety does not create any
difficulties, besides clogging notation. To improve exposition,
let us assume that $r+a\sqrt{r}$ itself is integer.) We assume that there
is a finite set $I$
of
Poisson input customer
flows $i$ called classes with rates $\lambda_i r, i\in I$. Each customer of class $i\in I$
has an exponentially distributed service time (requirement) with mean $1/\mu_i$,
where (the service rate) parameter $\mu_i>0$.
At any given time a customer is either in service (by one server) or is waiting in the
queue of unlimited capacity; each server can serve at most one customer at any given time.
If and when the cumulative amount of time the customer spends in service
(which can, in principle, be preempted and resumed multiple times)
reaches its service requirement, the customer leaves the system.
Upon arriving into the system, for each class $i$ customer a random patience
time is generated according to the exponential distribution with
mean $1/\nu_i$, where (the abandonment rate)
parameter $\nu_i\ge 0$. If $\nu_i= 0$, the patience time is infinite.
If and when the cumulative amount of time the customer spends in the queue
(a customer can, in principle, move between queue and service multiple times)
reaches its patience time, the customer abandons and leaves the system.
Speaking less formally, a class $i$ customer leaves the system
in a small time interval $dt$ with probability $\mu_i dt$ if it is in service,
and with probability $\nu_i dt$ if it is in the queue.

We assume that $\sum_i\lambda_i/\mu_i=1$.
Then the nominal utilization of the system is
\begin{align*}
\frac{\sum_i\lambda_i r/\mu_i}{r+a\sqrt{r}}=1-{a\over \sqrt{r}+a}=1-{a\over \sqrt{r}}-o\left({1\over \sqrt{r}}\right),
\end{align*}
namely the queueing system is in the Halfin-Whitt heavy traffic regime.
Notice that when all abandonment rates $\nu_i=0$, the nominal utilization is the actual average
utilization.
Introduce the following notations:
$\rho_i\triangleq\lambda_i/\mu_i$ (implying $\sum_i \rho_i =1$),
$\mu_{\min}\triangleq \min_i \mu_i$ and
$\mu_{\max}\triangleq \max_i \mu_i$, $\nu_{\min}\triangleq \min_i \nu_i$ and
$\nu_{\max}\triangleq \max_i \nu_i$.
For every time $t\in\R_+$, let $Z_i^r(t)$ be the number of type $i$ customers in the system
both in service and waiting in the queue, and
$\Psi_i^r(t)$ be the number of type $i$ customers in service at time $t$. Then $Q_i^r(t)=Z_i^r(t)-\Psi_i^r(t)$
is the number of type $i$ customers waiting in the queue at time $t$.

We now formally define a family of service disciplines under the consideration.
Rather than specifying this family directly we describe the properties we want this family to satisfy. Namely, we take a very general point of view that the system evolution
is described by some countable continuous time Markov chain,
which has a projection $\left((Z^r_i(t),i\in I),(\Psi^r_i(t),i\in I)\right)$,
and which satisfies Assumption~\ref{assumption:Basic} below.
We then demonstrate that under many
interesting policies, the corresponding Markov processes indeed satisfy this
assumption. 

All formal results and proofs in this paper are concerned with abstract
processes satisfying Assumption~\ref{assumption:Basic}. 
Thus, the physical notions of customer arrivals and departures,
introduced earlier in this section,
can be viewed as no more than ``interpretations'' 
of increments and decrements of components $Z_i$ of the process.
But, of course, it will be useful to keep such physical interpretations
in mind, as they provide important insights into our methods.

\begin{assumption}\label{assumption:Basic}
Assume that for each $r$ there exists a Markov process $(X^r(t),t\in\R_+)$
in a countable state space $\mathcal{X}^r$ with the following properties:
\begin{enumerate}

\item Vector $\left((Z^r_i(t),i\in I),(\Psi^r_i(t),i\in I)\right) \in \Z_+^{2|I|}$,
such that $\Psi^r_i(t)\le Z^r_i(t)$ 
for all $i$, is a deterministic
function $f$ of $X^r(t)$; this function is such that for every value
${\bf w}$ within its range, the
pre-image $f^{-1}({\bf w})$ is finite cardinality.
Vector $(Q^r_i(t),i\in I)\in \Z_+^{|I|}$
is defined by $Q^r_i(t)=Z^r_i(t)-\Psi^r_i(t)$, and therefore is a deterministic
function of $X^r(t)$ as well.\\
 {\em Interptetation:} The process
$(Z^r_i(t),\Psi^r_i(t), i\in I)$, which describes the performance of the system
we are interested in, is a projection of some ``more detailed''
underlying Markov process
$X^r(t)$. The number of states of $X^r(t)$ that correspond to any given value
of $(Z^r_i(t), \Psi^r_i(t), i\in I)$ is finite.

\item The following relation holds:
\begin{align}
\label{cond-non-idle}
\sum_{i\in I}\Psi_i^r(t)=\min\lbrace N^r,\sum_{i\in I}Z_i^r(t)\rbrace.
\end{align}
{\em Interptetation:} Whatever scheduling policy the process $X^r(t)$
``encodes'', it is non-idling.

\item The transition rates of the process $X^r(t)$ are such that, for any state $X^r(t)$:

\begin{enumerate}
\item
The total rate of all transitions
such that vector $(Z_i^r(t), i\in I)$ changes to $(Z_i^r(t), i\in I)+\mb{e}_{\ell}$
is $\lambda_{\ell} r$, where $\mb{e}_i$ is the $i$-th unit vector in $\R^{|I|}$.

\item
If $Z_{\ell}^r(t)\ge 1$, then
the total rate of all transitions
such that vector $(Z_i^r(t), i\in I)$ changes to $(Z_i^r(t), i\in I)-\mb{e}_{\ell}$
is $\mu_{\ell} \Psi_{\ell}^r(t) + \nu_{\ell}Q^r_{\ell}(t)$.

\item The transitions such that any component of  $(Z_i^r(t), i\in I)$
increases or decreases by more than $1$, as well as transitions changing
more than one component of $(Z_i^r(t), i\in I)$, have zero rates.

\end{enumerate}
{\em Interptetation:} Scheduling policy is consistent with the notions of 
input processes, service
times and patience described at the beginning of this section. 
Note that there might be
positive rate transitions that ``reshuffle'' the values of occupancies
$(\Psi^r_i(t), i\in I)$ in an arbitrary way, as long as
conditions \eqn{cond-non-idle} and
$\Psi^r_i(t)\le Z^r_i(t)$ are not violated.

\item Markov process $X^r(t)$ is irreducible. \\
{\em Interptetation:} This condition is for convenience only --
it allows us to claim not only existence, but also uniquenees of
stationary distribution (as discussed below). Also,
this condition is very non-restrictive -- typically
the ``encoding process'' $X^r(t)$ can be constructed in a way such that
there is only one state corresponding to the ``empty'' system (with all
$Z_i^r=0$), and this state obviously is reachable from any other.

\end{enumerate}
\end{assumption}

Assumption~\ref{assumption:Basic} encompass a broad variety of
non-idling disciplines, with or without service preemptions.
For illustrative purposes, let us discuss the Class Priority (CP)
and First-In-First-Out (FIFO) examples.
The CP
 policy is fixed  by setting a total order $\prec$ on classes which without the loss of generality
is assumed to be $1\prec 2\prec \cdots\prec |I|$. This means that the jobs in class $|I|$ have the highest priority, the jobs in class $|I|-1$ have the next highest, etc. At any time,
the servers give priority to the jobs with the highest priority in a non-preemptive
or preemptive fashion.
Namely, every server upon service completion inspects the queue and selects
for service a job with the highest priority level, ties broken arbitrarily.
For the non-preemptive version,
once the service is initiated it is completed without interruption; in this case,
we can simply take $X^r(t)=(Z_i^r(t),\Psi_i^r(t), i\in I)$.
For the premptive version, if an arriving customer of type $i$ finds all servers busy
and if $j<i$, where $j$ is the lowest priority being served,
the arriving customer will force one of the type $j$ customers out to the queue and
replace it in service; in this case, the process is even simpler,
$X^r(t)=(Z_i^r(t), i\in I)$, because $(\Psi_i^r(t), i\in I)$ is uniquely determined
by $(Z_i^r(t), i\in I)$. In either case,
the ``empty'' state, with all $Z_i^r(t) = \Psi_i^r(t)=0$, is reachable from any other,
and therefore the chain is irreducible.

For the FIFO policy,
the statespace $\mathcal{X}^r$ of chain $X^r(t)$
is the set of finite sequences
$x_1x_2\cdots x_m$, where $x_n\in I$.
Each sequence represents the set of customers in the system,
in the order of their arrivals. The first $\min\{m,N^r\}$ customers are being served,
and the remaining ones wait in queue. Clearly, $(Z_i^r(t),\Psi_i^r(t), i\in I)$
is uniquely determined by $X^r(t)$. The empty state is reachable from any other.

We will now discuss
several general properties of the process $X^r(t)$ that follow directly
from Assumption~\ref{assumption:Basic}. For $\mb{x}\in \mathcal{X}^r$,
denote by $T(\mb{x})$  the set of states to which
a direct transition from $\mb{x}$ is possible,
and by $T^{-1}(\mb{x})$ the set of states from which
a direct transition to $\mb{x}$ is possible.
It is easy to observe that for any $\mb{x}$
both $T(\mb{x})$ and $T^{-1}(\mb{x})$ are finite cardinality -- this
follows from the fact that there is only a finite number of states
corresponding to each $\left((Z^r_i(t),i\in I),(\Psi^r_i(t),i\in I)\right)$,
and then to each $(Z^r_i(t),i\in I)$, and direct transitions
cannot cause any $Z^r_i(t)$ to change by more than $1$.
We can also observe that, starting any initial state,
with probability $1$ the process makes at most a finite number of
transitions in any finite time interval; indeed, 
with probability $1$ each $Z^r_i(t)$
remains finite on finite intervals, because the total rate of all transitions
causing it to increase (by $1$) is constant and equal $\lambda_i r$,
and then the number of states visited on finite intervals is finite,
from which we easily obtain that the number of transitions is finite as well.
This means that the process satisfying Assumption~\ref{assumption:Basic}
is well defined by its transition rates -- 
with probability $1$ there are no ``explosions''.

Since the system nominal utilization is strictly less than $1$
and condition \eqn{cond-non-idle} must hold,
the total workload (expected unfinished work) $\sum_i Z^r_i(t)/\mu_i$
must have a negative average drift, as long as $\sum_i Z^r_i(t)/\mu_i\ge N^r/\mu_{min}$.
(The latter condition implies $\sum_i Z^r_i(t)\ge N^r$, in which case
the average rate at which unfinished work in the system is processed is strictly
greater than the average rate at which new work arrives.)
This, along with the
fact that for any $K\ge 0$ there is only a finite number of states
with $\sum_i Z^r_i(t) \le K$, and the
assumption that
Markov chain $X^r(t)$ is irreducible, implies (using standard methods)
that {\em $X^r(t)$ is positive recurrent with a unique invariant probability
distribution $\pi^r$.}
From now
on we write $X^r(\infty)$ for the steady state version of $X^r(t)$.
The same applies to the variables determined by $X^r(t)$.
For example $Q^r(\infty)$ denotes the steady state queue size.

We remark that the irreducibility assumption is made to simplify the
exposition -- it is
non-essential, in the following sence.
If we do not require irreducibility, it is easy to show (again, using standard
methods) that Markov chain $X^r(t)$ has at most a finite number of
classes of communicating states, reachable w.p.1 from any transient state.
Restricted to any of those state classes, $X^r(t)$ is positive recurrent,
with unique stationary distribution.
Thus, stationary distributions of $X^r(t)$ exist and are characterized as 
convex combinations of a fixed finite set of distributions.
All results of this paper would still hold if instead of
{\em the} stationary distribution we would consider {\em a} stationary distribution.

Next we discuss some properties related to the (infinitesimal) generator $A^r$
of the Markov process $X^r(t)$.
The domain of generator $A^r$ might not contain all functions $F:\mathcal{X}^r\rightarrow \R$.
However, the following operator $\bar{A}^r$  is defined for {\em any} function $F$:
$$
\bar{A}^r F(\mb{x}) = \sum_{\mb{y}\in T(\mb{x})} \xi(\mb{x},\mb{y})(F(\mb{y})-F(\mb{x})),
~~ \mb{x}\in \mathcal{X}^r,
$$
where $\xi(\mb{x},\mb{y})$ is the transition rate from $\mb{x}$ to $\mb{y}$.
Given properties of our process $X^r(t)$, the following property is 
easily verified
directly: any function $F$, which is constant everywhere
except a finite subset of $\mathcal{X}^r$, is within the domain of
$A^r$ and, moreover,
$$
A^r F(\mb{x}) = \bar{A}^r F(\mb{x}), ~~\forall \mb{x},
$$
\begin{align}
\label{eq:ExpectedGeneratorZero}
\E A^r F(X^r(\infty))=\E \bar{A}^r F(X^r(\infty))=0.
\end{align}

Let us additionally denote
 $\Phi_i^r(t)=Z_i^r(t)/\mu_i$. Namely, $\Phi_i^r(t)$ is the total workload
at time $t$ associated with class $i$ jobs
- the sum of expected remaining service times of all class $i$ jobs present in the system at time $t$. The total workload is $\Phi^r(t)= \sum_i \Phi_i^r(t)$.

Now let us introduce the diffusion scaled version of our processes:
\begin{align*}
\hat Z_i^r(t)={Z_i^r(t) -\rho_i r\over \sqrt{r}},
\qquad \hat \Psi_i^r(t)={\Psi_i^r(t) -\rho_i r\over \sqrt{r}},
\qquad \hat \Phi_i^r(t)={\Phi_i^r(t) -\rho_i r/\mu_i \over \sqrt{r}}
={Z_i^r(t) \over \mu_i}
\qquad \hat Q_i^r(t)={Q_i^r(t)\over \sqrt{r}}.
\end{align*}
Furthermore, let $\hat Z^r(t)=\sum_i\hat Z_i^r(t)$, $\hat \Phi^r(t)=\sum_i\hat \Phi_i^r(t)$,
$\hat Q^r(t)=\sum_i\hat Q_i^r(t)$.
Namely, $\hat Z^r(t)$ is the diffusion scaled total number of customers
in the system,
$\hat \Phi^r(t)$ is the diffusion
scaled total workload in the system,
and $\hat Q^r(t)$ is the diffusion scaled total queue length.
Our method of proof is based on the Lyapunov function technique.
We will be considering
Lyapunov functions of the form $\exp(\theta \hat \Phi^r)$
and $\exp(\theta (\hat \Phi^r)^2)$.
The random variables $Z^r(t),\hat Z^r(t), \Phi_i^r(t)$, etc., corresponding to the steady state regime are denoted using the same notation,
except $\infty$ replaces $t$. For example $\hat Z^r(\infty)$ is the steady-state number customers in the system after the diffusion rescaling.
In addition, we  write $\hat Z^{r,+}(\infty)$
to mean $(\hat Z^r(\infty))^+$, $\hat Z_i^{r,+}(\infty)$
to mean $(\hat Z_i^r(\infty))^+$, etc.

We now ready to state our main results.
\begin{thm}
\label{th-main-no-abandonments}
The following properties hold.\\
(i) If $\nu_i \le \mu_i$ for all $i$ (including possibly
$\nu_i=0$ for some $i$), then for every $\theta>0$
\begin{align}
\label{eq-est3331}
\limsup_{r\rightarrow\infty} \E\exp\left(\theta \sum_i \hat Z_i^{r,-}(\infty)\right)<\infty.
\end{align}
(ii) There exists $\bar\theta>0$ such that for
every $\theta\in [0,\bar\theta]$
\begin{align}
\label{eq-est3332}
\limsup_{r\rightarrow\infty} \E\exp\left(\theta \sum_i \hat Z_i^{r,+}(\infty)\right)<\infty.
\end{align}
(iii) There exists $\bar\theta'>0$ such that for
every $\theta\in [0,\bar\theta']$
\begin{align}
\label{eq-est3333}
\limsup_{r\rightarrow\infty} \E\exp\left(\theta \sum_i \hat Z_i^{r,-}(\infty)\right)<\infty.
\end{align}
(iv) In particular, 
the sequence $\hat Z^r(\infty), r\in\Z_+$ is tight.
\end{thm}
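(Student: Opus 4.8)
The plan is to combine a stochastic comparison with the $M/M/\infty$ system (which controls how far \emph{below} nominal the class occupancies can be) with a steady-state drift analysis of the \emph{total workload} (which, unlike the individual $Z_i^r$ or the queue $Q^r$, has policy-independent dynamics), and to glue the two together through elementary pathwise inequalities. For (i): by Assumption~\ref{assumption:Basic}(3b) the total departure rate of class $i$ is $\mu_i\Psi_i^r+\nu_iQ_i^r=\nu_iZ_i^r+(\mu_i-\nu_i)\Psi_i^r\le\mu_iZ_i^r$ when $\nu_i\le\mu_i$ (using $\Psi_i^r\le Z_i^r$), so a monotone coupling gives $Z_i^r(\infty)\succeq_{st}\mathrm{Pois}(\rho_ir)$ and hence $\hat Z_i^{r,-}(\infty)\preceq_{st}(\rho_ir-\mathrm{Pois}(\rho_ir))^+/\sqrt r$; a Chernoff bound on the lower Poisson tail makes $\E\exp(\theta\hat Z_i^{r,-}(\infty))$ bounded uniformly in $r$ for \emph{every} $\theta$, and summing over the finite index set $I$ gives \eqn{eq-est3331}. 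These comparison facts, and a monotonicity of the stationary distribution in the abandonment rates, are the preliminary results I would establish first, along the lines indicated for Section~\ref{section:key bounds}.

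\textbf{The workload Lyapunov bound.}
For (ii)--(iv) the core is the exponential Lyapunov function $G_\theta=\exp(\theta\hat\Phi^r)$ for the total workload $\Phi^r=\sum_iZ_i^r/\mu_i$. Since $\Phi^r$ jumps by $\pm1/\mu_i$ at the rates of Assumption~\ref{assumption:Basic}(3), expanding $e^{\theta\delta}-1$ in the jump sizes $\delta=\pm1/(\mu_i\sqrt r)$ gives, for $\theta$ below an absolute constant,
\begin{align}
\label{eq:planGen}
\bar A^rG_\theta\ \le\ G_\theta\Bigl[\tfrac{\theta}{\sqrt r}\,D^r+\tfrac{\theta^2}{r}\,S^r\Bigr],\qquad
D^r=r-\sum_i\Psi_i^r-\sum_i\tfrac{\nu_i}{\mu_i}Q_i^r,\quad
S^r=\sum_i\tfrac{\lambda_ir}{\mu_i^2}+\sum_i\tfrac{\mu_i\Psi_i^r+\nu_iQ_i^r}{\mu_i^2}.
\end{align}
On $\{\sum_iZ_i^r\ge N^r\}$ (a queue present), \eqn{cond-non-idle} forces $\sum_i\Psi_i^r=N^r$, so $D^r=-a\sqrt r-\sum_i\nu_iQ_i^r/\mu_i$; the abandonment part of $\tfrac{\theta^2}{r}S^r$ is dominated by the abandonment part of $\tfrac{\theta}{\sqrt r}D^r$ (they differ by the factor $\theta/(\sqrt r\,\mu_{\min})\le1$) and the rest of $S^r$ is $O(r)$, so $\bar A^rG_\theta\le-\delta\theta\,G_\theta$ there once $\theta\le\bar\theta$. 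On the complement $\{\sum_iZ_i^r<N^r\}$ all $Q_i^r=0$ and $D^r=r-\sum_iZ_i^r=-\sqrt r\,\hat Z^r\le\sqrt r\sum_i\hat Z_i^{r,-}$, with $S^r=O(r)$; there one also has the algebraic bound $\hat\Phi^{r,+}\le(a+\sum_i\hat Z_i^{r,-})/\mu_{\min}$, so $\bar A^rG_\theta\le G_\theta(\theta\sum_i\hat Z_i^{r,-}+O(\theta^2))$ and $G_\theta\le e^{\theta a/\mu_{\min}}\exp\bigl(\tfrac{\theta}{\mu_{\min}}\sum_i\hat Z_i^{r,-}\bigr)$ on that set. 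Substituting finite truncations of $G_\theta$ into \eqn{eq:ExpectedGeneratorZero} (each is constant outside the finite set $\{\hat\Phi^r\le K\}$) and letting $K\to\infty$ then yields
\begin{align*}
\delta\theta\,\E\bigl[G_\theta(X^r(\infty))\,\mathbf{1}\{\text{queue}\}\bigr]\ \le\ \E\Bigl[\bigl(\theta\textstyle\sum_i\hat Z_i^{r,-}(\infty)+O(\theta^2)\bigr)e^{\theta a/\mu_{\min}}\exp\bigl(\tfrac{\theta}{\mu_{\min}}\textstyle\sum_i\hat Z_i^{r,-}(\infty)\bigr)\Bigr],
\end{align*}
whose right side is bounded uniformly in $r$ provided $\sum_i\hat Z_i^{r,-}(\infty)$ has a uniform exponential moment of order slightly above $\theta/\mu_{\min}$; under $\nu_i\le\mu_i$ that is (i), and for general $\nu_i$ it is supplied by the abandonment-rate monotonicity (reducing the relevant input to the case $\nu\le\mu$). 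Hence $\limsup_r\E\exp(\theta\hat\Phi^{r,+}(\infty))<\infty$ for $\theta\le\bar\theta$; since $\hat\Phi^{r,-}\le\mu_{\min}^{-1}\sum_i\hat Z_i^{r,-}$ pathwise, the same holds for $\hat\Phi^{r,-}$. Combining this with the pathwise inequalities $\sum_i\hat Z_i^{r,+}\le\mu_{\max}\hat\Phi^{r,+}+\tfrac{\mu_{\max}}{\mu_{\min}}\sum_i\hat Z_i^{r,-}$ and $\sum_i\hat Z_i^{r,-}\le\mu_{\max}\hat\Phi^{r,-}+\tfrac{\mu_{\max}}{\mu_{\min}}\sum_i\hat Z_i^{r,+}$, Cauchy--Schwarz, and (i), gives \eqn{eq-est3332} and \eqn{eq-est3333}; finally (iv) follows since $|\hat Z^r(\infty)|\le\sum_i\hat Z_i^{r,+}(\infty)+\sum_i\hat Z_i^{r,-}(\infty)$, so $\{\hat Z^r(\infty)\}$ has a uniform exponential moment and is therefore tight.

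\textbf{Expected main obstacle.}
The delicate step is the region $\{\sum_iZ_i^r<N^r\}$, where the workload drift $D^r$ has no useful sign: there one must argue that the individual occupancies cannot drop appreciably below $\rho_ir$, i.e.\ that $\sum_i\hat Z_i^{r,-}(\infty)$ is genuinely controlled, which is exactly where the $M/M/\infty$ comparison (and, when $\nu_i>\mu_i$, the abandonment-rate monotonicity) is indispensable, and which is the only place the restriction to small $\theta$ in (ii)--(iii) enters; reconciling this with an arbitrary state-dependent, $r$-dependent non-idling policy is what makes the argument delicate. Two secondary points I would need to be careful about are making \eqn{eq:planGen} rigorous uniformly in $r$ --- the abandonment contribution to $S^r$ is of order $\sqrt r\,\hat Q^r$ rather than $O(r)$, but is absorbed by the abandonment term of $D^r$ --- and arranging the truncation so that $\exp(\theta\hat\Phi^r)$ sits in the domain of $A^r$ and the passage to the limit is justified.
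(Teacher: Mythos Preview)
Your treatment of (i), (ii), and (iv) is essentially the paper's: the $M/M/\infty$ coupling for (i), the exponential Lyapunov function $\exp(\theta\hat\Phi)$ with the queue/no-queue split for (ii) (the paper splits the no-queue region into two pieces, but handles both via the same pathwise bound you write down), and monotonicity in the abandonment rates to reduce (ii) to the case $\nu_i\le\mu_i$ where (i) is available. That part is sound.

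The genuine gap is (iii). Statement (iii) has content only when some $\nu_i>\mu_i$ --- otherwise it is weaker than (i). In that regime your argument becomes circular: you propose to bound $\hat\Phi^{r,-}$ via $\hat\Phi^{r,-}\le\mu_{\min}^{-1}\sum_i\hat Z_i^{r,-}$, and then to bound $\sum_i\hat Z_i^{r,-}$ via $\sum_i\hat Z_i^{r,-}\le\mu_{\max}\hat\Phi^{r,-}+\tfrac{\mu_{\max}}{\mu_{\min}}\sum_i\hat Z_i^{r,+}$. Neither step stands on its own unless you already have (i), and (i) is not available here. Your appeal to monotonicity does not help: Lemma~\ref{lem-monotonicity} goes only one way --- decreasing the $\nu_i$ can be coupled to \emph{increase} each $Z_i$ --- so it lets you transfer an upper bound on $\sum_i\hat Z_i^{r,+}$ from the reduced system to the original one (which is why (ii) works), but it gives the wrong inequality for $\hat Z_i^{r,-}$. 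The paper explicitly notes this asymmetry after Lemma~\ref{lem-monotonicity}. Similarly, if you try to run your workload-drift inequality \emph{directly} in the original system with some $\nu_i>\mu_i$, the right-hand side of your displayed balance relation still contains $\E[(\sum_i\hat Z_i^{r,-})\exp(c\theta\sum_i\hat Z_i^{r,-})]$, which you cannot control.

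What the paper does for (iii) is a \emph{second}, independent Lyapunov analysis in the original system, now with $\exp(-\theta\hat\Phi)$. On $\{\hat Z<-b\}$ the drift of $-\hat\Phi$ is $\hat Z_a<-b$ (all queues are empty), giving the needed negative drift; on $\{\hat Z\ge -b\}$ one has the mirror of your pathwise bound, $-\hat\Phi\le(\mu_{\min}^{-1}-\mu_{\max}^{-1})\sum_i\hat Z_i^{r,+}+b/\mu_{\min}$, and now the required input is a uniform exponential moment of $\sum_i\hat Z_i^{r,+}$ --- which is exactly (ii), already proved for general $\nu_i$. This yields $\limsup_r\E\exp(-\theta\hat\Phi)<\infty$, and then $\sum_i\hat Z_i^{r,-}/\mu_i=-\hat\Phi+\sum_i\hat Z_i^{r,+}/\mu_i$ together with (ii) and H\"older gives (iii). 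In short: (i) feeds (ii), and (ii) (not (i)) feeds (iii); your plan tries to feed (iii) from (i) and monotonicity, which is precisely the direction that fails.
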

Our proof approach can lead to explicit bounds on $\bar\theta$ and
$\bar\theta'$. While these bounds can possibly be improved, we do not pursue
here a goal of finding the sharpest bounds.
Clearly, statement (iv) is a corollary from (ii) and (iii), because
$|\hat Z^r(\infty)| \le \sum \hat Z_i^{r,+}(\infty) + \sum \hat Z_i^{r,-}(\infty)$, and then
\begin{align*}
\limsup_{r\rightarrow\infty} \E\exp\left(\theta |\hat Z^r(\infty)|\right)<\infty,
\end{align*}
for sufficiently small $\theta>0$ by application of the Cauchy-Schwartz inequality.
The proof of (i) will follow from a simple comparision to the infinite-server system.
The proof of (ii) will rely in essential way on statement (i).
Finally, the proof of (iii) will rely on statement (ii).

Theorem~\ref{th-main-no-abandonments} shows,
in particular, that in the Halfin-Whitt heavy traffic regime the steady state total
queue length $Q^r(\infty)$ scales like $O(\sqrt{r})$, because
\begin{align*}
\hat Q^r(\infty)&=(Z^r(\infty)-r-a\sqrt{r})^+/\sqrt{r}\\
&\le  (Z^r(\infty)-r)^+/\sqrt{r}\\
&=\hat Z^{r,+}(\infty).
\end{align*}
Furthermore, our result implies that
$\hat Q^r(\infty)$ decays at an exponential rate:
\begin{align*}
\pr\left(\hat Q^r(\infty)\ge x\right)\le c_1\exp(-c_2 x),
\end{align*}
for some constants $c_1,c_2>0$ uniformly in $r$.
The steady state number of idle servers,
$\left(r+a\sqrt{r}-Z^r(\infty)\right)^+$, also scales like $O(\sqrt{r})$,
because it is upper bounded by $a\sqrt{r} + \sum_i \hat Z_i^{r,-}(\infty)\sqrt{r}$.

The tightness of the sequence $\hat Z^r(\infty)$ implies the existence of at least one subsequential weak limit. Namely, there exists
a subsequence $r_n, n\in\Z_+$ and a random variable denoted by $\hat Z(\infty)$, such that $\hat Z^r(\infty)\Rightarrow\hat Z(\infty)$.
Such a limit is not necessarily unique as it may depend on the details of the scheduling policy.
Then, Theorem~\ref{th-main-no-abandonments} implies the following

\begin{cor}\label{coro:WeakLimitTight}
For a sufficiently small fixed $\theta>0$ and every weak limit $\hat Z(\infty)$ of the sequence $\hat Z^r(\infty)$,
$\E[\exp(\theta |\hat Z(\infty)|)]<\infty$.
\end{cor}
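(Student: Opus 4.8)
The plan is to transfer the uniform-in-$r$ exponential moment bound of Theorem~\ref{th-main-no-abandonments} to the weak limit by a lower-semicontinuity argument; no new probabilistic input beyond that theorem is needed.

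First I would fix $\theta$ small, specifically $\theta\le \tfrac12\min(\bar\theta,\bar\theta')$, with $\bar\theta,\bar\theta'$ the constants from parts (ii) and (iii) of Theorem~\ref{th-main-no-abandonments}. Since $\hat Z^r(\infty)=\sum_i\hat Z_i^r(\infty)$, we have $|\hat Z^r(\infty)|\le \sum_i\hat Z_i^{r,+}(\infty)+\sum_i\hat Z_i^{r,-}(\infty)$, and the Cauchy--Schwarz inequality gives
\[
\E\exp\bigl(\theta|\hat Z^r(\infty)|\bigr)\le \Bigl(\E\exp\bigl(2\theta\textstyle\sum_i\hat Z_i^{r,+}(\infty)\bigr)\Bigr)^{1/2}\Bigl(\E\exp\bigl(2\theta\textstyle\sum_i\hat Z_i^{r,-}(\infty)\bigr)\Bigr)^{1/2}.
\]
By parts (ii) and (iii) of Theorem~\ref{th-main-no-abandonments}, for our choice of $\theta$ both factors on the right are bounded uniformly in $r$ (this is precisely the Cauchy--Schwarz computation already indicated in the discussion following that theorem), so $M:=\limsup_{r\to\infty}\E\exp\bigl(\theta|\hat Z^r(\infty)|\bigr)<\infty$.

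Second, let $\hat Z(\infty)$ be any weak limit, say $\hat Z^{r_n}(\infty)\Rightarrow\hat Z(\infty)$ along a subsequence $r_n$. The map $x\mapsto\exp(\theta|x|)$ is continuous and nonnegative, so by the portmanteau theorem (equivalently, by Skorokhod's representation theorem followed by Fatou's lemma),
\[
\E\exp\bigl(\theta|\hat Z(\infty)|\bigr)\le\liminf_{n\to\infty}\E\exp\bigl(\theta|\hat Z^{r_n}(\infty)|\bigr)\le M<\infty.
\]
This proves the claim for this particular $\theta$, and therefore for every smaller $\theta>0$ as well, since $\exp(\theta'|x|)\le\exp(\theta|x|)$ whenever $0<\theta'\le\theta$.

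The argument is essentially routine and there is no substantive obstacle; the only point requiring a little care is the limit passage in the second step, where the test function $\exp(\theta|\cdot|)$ is \emph{unbounded}, so one cannot invoke the elementary definition of weak convergence directly but must instead use its lower-semicontinuity form (Fatou's lemma after a Skorokhod coupling, or the portmanteau inequality for nonnegative continuous functions). In particular, no uniform integrability is needed: we are not asserting convergence $\E\exp(\theta|\hat Z^{r_n}(\infty)|)\to\E\exp(\theta|\hat Z(\infty)|)$ of the exponential moments, only finiteness of the limiting one, which is exactly what the Fatou-type inequality delivers.
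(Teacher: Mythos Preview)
Your proof is correct and follows essentially the same approach as the paper: both first obtain the uniform bound $\limsup_r \E\exp(\theta|\hat Z^r(\infty)|)<\infty$ from Theorem~\ref{th-main-no-abandonments} via Cauchy--Schwarz, and then pass this bound to the weak limit. The only cosmetic difference is in the limit passage: the paper truncates at level $k$, applies bounded convergence, and then lets $k\to\infty$ via monotone convergence, whereas you invoke the Fatou/portmanteau inequality for nonnegative continuous functions directly --- these are equivalent standard devices.
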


\begin{proof}
Fix positive $\theta \le \min(\bar \theta,\bar \theta')$.
Fix arbitrary $k>0$. If $\hat Z^r(\infty)$ converges weakly to $\hat Z(\infty)$, then by the Bounded Convergence Theorem,
\begin{align*}
\E\left[\min\left(\exp(\theta |\hat Z(\infty)|),k\right)\right]=\lim_{r\rightarrow\infty}
\E\left[\min\left(\exp(\theta |\hat Z^r(\infty)|),k\right)\right]\le \limsup_{r\rightarrow\infty}\E\left[\exp(\theta |\hat Z^r(\infty)|)\right],
\end{align*}
where the RHS is finite and does not depend on $k$. It remains to take the limit
of LHS on $k\to\infty$ and use the Monotone Convergence Theorem.
\end{proof}

Theorem~\ref{th-main-no-abandonments} leaves open the question whether (\ref{eq-est3333}) holds for all
$\theta$. (Equivalently whether (\ref{eq-est3331}) holds also when some $\nu_i>\mu_i$.) We conjecture
that it is the case, but we are currently  unable to verify this statement.

We now turn to the case when every customer class has strictly
positive abandonment rate: $\nu_i>0$. In this case, even tighter bounds
on any weak limit of the stationary distribution can be obtained.
It is known
that the presence of abandonments changes the properties of the tail of the distribution of $\hat Z^r(\infty)$. As an illustration
consider a single class model $M/M/N^r$, where the abandonment rate $\nu=\mu$ for simplicity. It is straightforward to see
that $Z(\infty)$ has the same distribution as in the $M/M/\infty$ system, namely it is Poisson with mean $\lambda r/\mu=r$.
As $r\rightarrow\infty$ the distribution of $\hat Z^r(\infty)$
then approaches a standard normal distribution. Thus we might expect the
tail $\pr(\hat Z^r(\infty)>x)$ to decay at the rate $\exp(-O(x^2))$. Namely, one might conjecture the following analogue of Theorem~\ref{th-main-no-abandonments}
- there exists a sufficiently
small $\theta>0$ such that $\sup_r\E[\exp(\theta (\hat Z^r(\infty))^2)]<\infty$. However, this cannot be the case - if random variable $G$ has  Poisson
distribution with mean $r>0$, then $\E[\exp(\theta G^2)]=\infty$ for every $\theta>0$.
Nevertheless, we can show that the following analogue of
Corollary~\ref{coro:WeakLimitTight} holds.

\begin{thm}
\label{th-main-abandonments}
For every subsequential weak limit $\hat Z(\infty)$
of $\hat Z^r(\infty), r\in\Z_+$, the following properties hold.\\
(i) If $\nu_i \le \mu_i$ for all $i$
(including possibly $\nu_i=0$ for some $i$), then
there exists $\bar\theta'>0$, such that
for every $\theta\in [0,\bar\theta']$
\begin{align}
\label{eq-est333-for-minus}
\E\exp\left(\theta \left(\hat Z^-(\infty)\right)^2\right)<\infty.
\end{align}
(ii) Suppose $\nu_i>0$ for all $i\in I$. Then
there exists $\bar\theta>0$ such that for
every $\theta\in [0,\bar\theta]$
\begin{align}
\label{eq-est333-for-plus}
\E\exp\left(\theta \left(\hat Z^+(\infty)\right)^2\right)<\infty.
\end{align}
\end{thm}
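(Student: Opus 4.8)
The plan is to prove both parts by the Lyapunov-function / drift argument announced in the introduction, using $V(x)=\exp(\theta(\hat\Phi^r(x))^2)$ as the test function, and then passing to the weak limit exactly as in the proof of Corollary~\ref{coro:WeakLimitTight}. Recall the workload is $\Phi^r=\sum_i Z_i^r/\mu_i$, and $\hat\Phi^r$ is its diffusion-scaled, centered version; since $\sum_i\rho_i=1$ the centering constant is $r$, so $\hat\Phi^r=(\Phi^r-r)/\sqrt r$. The point of working with $\Phi^r$ rather than $Z^r$ is that its drift has a clean sign structure: when $\sum_i Z_i^r\ge N^r$ (all servers busy), every server is working, so the deterministic rate at which workload is depleted by service is exactly $\sum_i\mu_i\Psi_i^r/\mu_i=\sum_i\Psi_i^r=N^r=r+a\sqrt r$, while the arrival rate of workload is $\sum_i\lambda_i r/\mu_i=r$; hence there is a drift of $-a\sqrt r$ toward the origin, plus a further strictly negative abandonment term $-\sum_i(\nu_i/\mu_i)Q_i^r$ that becomes important precisely in the regime where $\hat\Phi^{r,+}$ is large. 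Part (i) concerns $\hat\Phi^{r,-}$, and there the abandonments help rather than hurt, so $\nu_i\le\mu_i$ suffices; part (ii) concerns $\hat\Phi^{r,+}$, where we genuinely need $\nu_{\min}>0$ to get the quadratic-in-$\hat\Phi^r$ restoring force.

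The key steps, in order. First, I would compute $\bar A^r V$ explicitly for $V=\exp(\theta(\hat\Phi^r)^2)$ using the transition rates in Assumption~\ref{assumption:Basic}(3): each $Z_i^r\to Z_i^r\pm1$ transition changes $\Phi^r$ by $\pm1/\mu_i$, hence changes $\hat\Phi^r$ by $\pm1/(\mu_i\sqrt r)$. A second-order Taylor expansion of $\exp(\theta(u+\delta)^2)$ around $u=\hat\Phi^r$, with $\delta=O(1/\sqrt r)$, gives
\begin{align*}
\bar A^r V(x) = V(x)\Big[\,2\theta\hat\Phi^r\cdot\widehat{\mathrm{drift}} + \theta\cdot O(1/\sqrt r)\cdot(1+\theta(\hat\Phi^r)^2) + (\text{higher order})\,\Big],
\end{align*}
where $\widehat{\mathrm{drift}}$ is the scaled drift of $\hat\Phi^r$. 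On the event $\{\hat\Phi^r\ge 0\}$ with all servers busy, $\widehat{\mathrm{drift}}\le -a - c\,\hat Q^r$ for a constant $c=\nu_{\min}/\mu_{\max}>0$, and since $\hat Q^r\ge \hat\Phi^r\mu_{\min}-a\sqrt r/\sqrt r$-type comparisons give $\hat Q^r\ge \mu_{\min}\hat\Phi^r - O(1)$ when $\hat\Phi^r$ is large, the leading term becomes $-2\theta c\mu_{\min}(\hat\Phi^r)^2 V$, a genuine negative quadratic drift that dominates the $O(1/\sqrt r)$ error terms once $\hat\Phi^r$ exceeds a fixed threshold. Second, I would use the standard argument: write $0=\E\bar A^r V(X^r(\infty))$ (valid after truncating $V$ to a finite set and using \eqref{eq:ExpectedGeneratorZero}, then taking limits via monotone convergence), split the expectation over $\{\hat\Phi^r\le B\}$ and $\{\hat\Phi^r>B\}$ for a suitable fixed $B$, and conclude $\limsup_r\E[(\hat\Phi^{r,+})^2 V(X^r(\infty))]$ is bounded in terms of $\limsup_r\E[V\mathbf 1\{\hat\Phi^r\le B\}]\le e^{\theta B^2}$, which yields $\limsup_r\E\exp(\theta(\hat\Phi^{r,+}(\infty))^2)<\infty$ for $\theta$ small enough. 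Third, I would convert the bound on $\hat\Phi^{r,\pm}$ into a bound on $\hat Z^{r,\pm}$: since $\Phi_i^r=Z_i^r/\mu_i$, we have $\hat\Phi^r=\sum_i\hat Z_i^r/\mu_i$, so $\hat Z^{r,+}\le\mu_{\max}\hat\Phi^{r,+}+(\text{finite-class correction})$ and similarly on the negative side with $\mu_{\min}$; a Cauchy–Schwarz / Jensen step then transfers the sub-Gaussian bound (possibly shrinking $\theta$). Finally, I would pass to the weak limit $\hat Z(\infty)$ using the truncate–and–take–limits device from the proof of Corollary~\ref{coro:WeakLimitTight} (bounded convergence for $\min(\exp(\theta(\hat Z^{r,\pm})^2),k)$, then monotone convergence in $k$).

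For part (i) the same scheme applies but is easier: on $\{\hat\Phi^r<0\}$ the system may have idle servers, but then $\sum_i Z_i^r<N^r$ and the dynamics of $Z^r$ are dominated by those of an $M/M/\infty$ system; by the comparison results of Section~\ref{section:key bounds} (Theorem~\ref{th-main-no-abandonments}(i) already gives the analogous statement for the first moment / exponential moment of $\hat Z^{r,-}$), the workload deficit $\hat\Phi^{r,-}$ behaves like a centered Poisson deviation whose squared exponential moment is uniformly controlled — indeed here the $-a$ drift plus the fact that below $N^r$ the service rate $\sum_i\mu_i\Psi_i^r$ is itself of order $r$ and pushing $Z^r$ back up provides a linear-in-$\hat\Phi^r$ restoring force, which after the $\exp(\theta(\cdot)^2)$ transformation gives the needed negative quadratic term; the condition $\nu_i\le\mu_i$ is only used to ensure that abandonments do not overwhelm this, via the monotonicity-in-$\nu$ result of Section~\ref{section:key bounds}.

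The main obstacle I anticipate is controlling the $O(1/\sqrt r)$ remainder terms in the Taylor expansion of $\exp(\theta(\hat\Phi^r)^2)$, because those terms come multiplied by a factor as large as $\theta^2(\hat\Phi^r)^3 V$ or $\theta(\hat\Phi^r)V$, and one must verify that for every fixed $\theta$ in the admissible range the negative quadratic drift $-2\theta c\mu_{\min}(\hat\Phi^r)^2 V$ strictly dominates them uniformly in $r$ once $\hat\Phi^r$ is past a fixed (r-independent) threshold. This is a matching-of-powers-of-$\hat\Phi^r$ bookkeeping exercise: the dangerous term is the one of the form $\theta\cdot r^{-1/2}\cdot\theta(\hat\Phi^r)^2\cdot 2\theta\hat\Phi^r\cdot(\text{rate }O(r))$, whose $r$-powers cancel to give a cubic term $\theta^3(\hat\Phi^r)^3V$; since it is cubic and the good term is only quadratic, I will need to first establish the $O(1/\sqrt r)$ (exponential, not sub-Gaussian) tightness of $\hat\Phi^r$ from Theorem~\ref{th-main-no-abandonments} to a priori localize $\hat\Phi^r$ to $O(1)$ scale with overwhelming probability, and then run the sub-Gaussian Lyapunov argument on the complement, where the cubic term is manageable. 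Making this two-scale argument rigorous — rather than just heuristically balancing drifts — is the crux.
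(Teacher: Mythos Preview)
Your Lyapunov strategy for (ii) --- test function $\exp(\theta\hat\Phi^2)$, abandonment gives a restoring drift linear in $\hat\Phi^+$ so that the first-order term is $-c\theta(\hat\Phi^+)^2 V$, then pass to the weak limit --- is indeed the paper's. But there are two genuine gaps.

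First, the intermediate claim $\limsup_r\E\exp(\theta(\hat\Phi^{r,+})^2)<\infty$ is \emph{false}: the paper itself notes that in the single-class case $\nu=\mu$ the steady-state $Z^r$ is Poisson$(r)$, and $\E\exp(\theta(\hat Z^{r})^2)=\infty$ for every $\theta>0$ and every $r$. What the paper actually proves (Proposition~\ref{prop:abandon}) is only the truncated bound $\limsup_r\E[I\{\hat\Phi\le k\}\exp(\theta\hat\Phi^2)]\le c_0$ with $c_0$ \emph{independent of $k$}; this suffices for the weak-limit statement but is strictly weaker than what your outline asserts, and your monotone-convergence removal of the truncation would fail.

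Second, your proposed two-scale localization is not what the paper does, and you are missing the key ingredient: part (i) is proved \emph{first}, by a direct argument with no Lyapunov function --- the $M/M/\infty$ coupling gives $\hat Z_i^-\le\hat G_i^-$ with $G_i$ Poisson$(\rho_i r)$, and a sharp Poisson lower-tail estimate (Lemma~\ref{lem-bound-on-Gminus-square}: $\pr\{H(p)=n\}\le C p^{-1/2}e^{-(n-p)^2/(2p)}$ for $n\le p$) yields $\limsup_r\E\exp(\theta(\hat Z_i^-)^2)<\infty$ directly. This sub-Gaussian control of $\sum_i\hat Z_i^-$ is then used repeatedly \emph{inside} the Lyapunov calculation for (ii). After truncating to $\hat\Phi_{(k)}=\min(\hat\Phi,k)$ (which caps the relevant transition rates at $O(r)$ and renders the second-order Taylor remainder $O(\theta+\theta^2\hat\Phi_{(k)}^2)$ --- your cubic worry evaporates), the genuinely hard terms in the expanded identity $\E\bar A\exp(\theta\hat\Phi_{(k)}^2)=0$ are cross-terms such as $\E\big[\hat\Phi^+ I\{\hat\Phi\le k\}\exp(\theta(\hat\Phi^+)^2)\sum_i\hat Z_i^-\big]$ and $\E\big[\exp(\theta\hat\Phi^-)\exp(\theta\hat\Phi^2)\big]$. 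These are handled by splitting on $\{\hat\Phi^+\gtrless C\sum_i\hat Z_i^-\}$ and invoking the Poisson bound from (i). Without feeding (i) into (ii) in this way, the drift inequality for (ii) does not close.
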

This result shows that, while the presence of abandonments does not change
the tail decay rate of $\hat Z^{r,+}(\infty)$ for each $r$,
the decay
rate of the tail of the weak limit $\hat Z^+(\infty)$
is faster -- it is Gaussian rather than simply exponential. This, in particular,
implies that
\begin{align*}
\pr\left(\hat Q(\infty)\ge x\right)\le c_3\exp(-c_4 x^2),
\end{align*}
for some constants $c_3,c_4>0$, where $\hat Q(\infty)$ is any weak limit of $\hat Q^r(\infty)$.

\section{Key bounds and monotonicity properties}
\label{section:key bounds}
We begin by establishing a few technical results. For the remainder of the paper we drop for convenience
the $(\infty)$ and superscript $r$
notations, and simply
use $Z,Z_i,\hat Z_i, \Phi_i$, etc. to denote our steady-state variables.
The same applies to the generator $A$ and operator $\bar A$ - the superscript $r$ is dropped.
\begin{lem}
\label{lemma:lowerbounds}
Suppose $\nu_i\le \mu_i$ for all $i$. Then for any $\theta\ge 0$,
\beql{eq-finite-exp222}
\limsup_{r\rightarrow\infty} \E \exp (\theta \hat Z_i^-) \le \exp (\rho_i \theta^2/2)+1,
\end{equation}
and
\beql{eq-finite-exp333}
\limsup_{r\rightarrow\infty} \E \exp (\theta \sum_{i} \hat Z_i^-) \le \prod_i [\exp (\rho_i \theta^2/2)+1].
\end{equation}
\end{lem}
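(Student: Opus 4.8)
The plan is to derive both bounds from a comparison of each per-class count $Z_i$ with an independent $M/M/\infty$ system. Fix a class $i$ and let $\bar Z_i$ denote the stationary number of customers in an $M/M/\infty$ queue with arrival rate $\lambda_i r$ and per-customer service rate $\mu_i$, so that $\bar Z_i$ is Poisson with mean $\rho_i r$; take the $\bar Z_i$, $i\in I$, mutually independent. First I would build, on a common probability space, a coupling of the stationary real system with these infinite-server systems such that $Z_i\ge \bar Z_i$ almost surely for every $i$. The construction is the usual monotone coupling: class-$i$ arrivals are shared between the real system and the $i$-th infinite-server system, while a class-$i$ customer in the real system departs at rate $\mu_i$ when in service and at rate $\nu_i\le\mu_i$ when waiting, so the class-$i$ total departure rate in the real system is $\mu_i\Psi_i+\nu_i Q_i\le\mu_i(\Psi_i+Q_i)=\mu_i Z_i$, which never exceeds the infinite-server departure rate; hence each real-system customer can be coupled so as to stay at least as long as its infinite-server copy, and $Z_i\ge\bar Z_i$ is preserved for all $i$ at all times. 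Independence of the $\bar Z_i$ across $i$ comes for free because the arrival streams and the individual service/patience clocks are independent across classes. To put this in the stationary regime I would run the real system started stationary together with empty infinite-server systems and pass to a subsequential weak limit of the joint state as $t\to\infty$, using positive recurrence of the real system (established in Section~\ref{section:model and results}) and ergodicity of $M/M/\infty$; the coordinatewise inequality and the mutual independence of the $\bar Z_i$ are closed properties and survive the limit.

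Given the coupling, the first bound \eqref{eq-finite-exp222} is immediate. From $Z_i\ge\bar Z_i$ we get $\hat Z_i^-\le\hat{\bar Z}_i^-$, where $\hat{\bar Z}_i=(\bar Z_i-\rho_i r)/\sqrt r$, and using the elementary inequality $e^{\theta x^-}\le 1+e^{-\theta x}$, valid for all $x\in\R$ and $\theta\ge0$,
\[
\E\exp(\theta\hat Z_i^-)\le\E\exp(\theta\hat{\bar Z}_i^-)\le 1+\E\exp(-\theta\hat{\bar Z}_i).
\]
Since $\bar Z_i$ is Poisson with mean $\rho_i r$, its moment generating function gives
\[
\E\exp(-\theta\hat{\bar Z}_i)=\exp\!\big(\rho_i r\,(e^{-\theta/\sqrt r}-1+\theta/\sqrt r)\big),
\]
and because $e^{-u}-1+u=u^2/2+O(u^3)$ the exponent tends to $\rho_i\theta^2/2$ as $r\to\infty$; hence $\limsup_{r\to\infty}\E\exp(-\theta\hat{\bar Z}_i)=\exp(\rho_i\theta^2/2)$, which yields \eqref{eq-finite-exp222}.

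For \eqref{eq-finite-exp333} I would write $\exp(\theta\sum_i\hat Z_i^-)\le\prod_i\exp(\theta\hat{\bar Z}_i^-)$ and use the mutual independence of the $\bar Z_i$ from Step 1 to factor the expectation:
\[
\E\exp\!\Big(\theta\sum_i\hat Z_i^-\Big)\le\prod_i\E\exp(\theta\hat{\bar Z}_i^-)\le\prod_i\big(1+\E\exp(-\theta\hat{\bar Z}_i)\big).
\]
Taking $\limsup_{r\to\infty}$ and pulling it inside the finite product (each factor converges) gives $\prod_i(\exp(\rho_i\theta^2/2)+1)$, as claimed.

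The only genuinely delicate step is the first one. Making the monotone coupling airtight for the abstract policy class of Assumption~\ref{assumption:Basic} --- where $X^r$ need not track individual customers --- requires a little care (for instance, lifting to a customer-level description, which does not change the law of $(Z_i,\Psi_i)$), and, most importantly for the \emph{product} form of \eqref{eq-finite-exp333}, one must ensure the dominating infinite-server systems can be taken \emph{mutually independent}; the transfer of the pathwise inequality to stationarity via a weak-limit argument is routine but should be spelled out.
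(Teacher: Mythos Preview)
Your proposal is correct and follows essentially the same approach as the paper: a monotone coupling with the $M/M/\infty$ system (yielding $Z_i\ge G_i$ with $G_i$ independent Poisson$(\rho_i r)$), the inequality $e^{\theta x^-}\le 1+e^{-\theta x}$, and the Poisson MGF asymptotics. The only cosmetic difference is that the paper transfers the pathwise inequality to stationarity by making the joint process an irreducible positive recurrent Markov chain, whereas you use a weak-limit argument; both are valid, and you correctly flag the coupling/stationarity step as the one requiring care.
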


\begin{proof}
We obtain the required lower bounds by comparing with a system with infinitely many servers.
Namely, in addition to the original system,
i.e. the process $X(t)$, which defines it,
consider the system with the same arrival rates,
same mean service times, same (exponential) service time distributions,
but with infinite number of servers.
Let $G_i$ be the steady-state number of customers
in service in this infinite server model.
Then, each $G_i$ has Poisson distribution with mean $\rho_i r$, and all
$G_i$'s are independent.

Given the
$\nu_i\le \mu_i$ assumption, clearly we can construct both the original and the infinite-server processes on a common probability space in a way such that, w.p.1
$G_i(t) \le Z_i(t)$ for all $t$.
That is, we can construct a ``joint'' Markov process,
for which the original process $X(t)$ and the infinite-server process
are different projections. Given that $X(t)$ is irreducible and positive recurrent,
we can make sure that the constructed joint process is
also irreducible and positive recurrent,
and thus has a unique stationary distribution.
Then, the relation $G_i \le Z_i$
holds in the stationary
regime as well.
Then also $\hat G_i \le \hat Z_i$, where
we denoted $\hat G_i=(G_i -\rho_i r)/\sqrt{r}$.
Since $G_i$ has Poisson distribution with parameter $\rho_i r$,
for any real $\theta$,
\beql{eq-finite-exp}
\E \exp (\theta \hat G_i) = \exp \left[ -\theta \rho_i \sqrt{r}
-\rho_i r (1-e^{\theta/\sqrt{r}}) \right]\stackrel{r\rightarrow\infty}{\longrightarrow} \exp (\rho_i \theta^2/2).
\end{equation}

Using $\hat{Z}_i\ge \hat{G}_i$, \eqn{eq-finite-exp}
immediately implies, that for any $\theta\ge 0$,
\beql{eq-finite-exp2}
\limsup_{r\rightarrow\infty} \E \exp (\theta \hat Z_i^-) \le
\limsup_{r\rightarrow\infty} \E \exp (\theta \hat G_i^-) \le
\exp (\rho_i \theta^2/2)+1,
\end{equation}
(where we used $\exp (\theta \hat G_i^-) \le 1+\exp (- \theta \hat G_i)$)
and
\beql{eq-finite-exp3}
\limsup_{r\rightarrow\infty} \E \exp (\theta \sum_i \hat Z_i^-) \le
\limsup_{r\rightarrow\infty} \E \exp (\theta \sum_i \hat G_i^-) \le
\prod_i [\exp (\rho_i \theta^2/2)+1].
\end{equation}
\end{proof}

Next we obtain  a bound which, roughly speaking, says that,
when all $\nu_i\le \mu_i$,
it is unlikely for $\hat\Phi$ to be large if $\hat Z$ is not large.
\begin{lem}
\label{lem-key-bound}
Suppose $\nu_i\le \mu_i$ for all $i$.
For arbitrary  $b\ge 0, \theta_1>0$, and $n\in\Z_+$,
there exists $C=C(b,\theta_1,n)$ such that
for all $\theta \in [0,\theta_1]$,
\beql{eq-key-bound}
\limsup_{r\rightarrow\infty} \E \left[I\{\hat{Z}\le b\} (\sum_i \hat Z_i^-)^n \exp(\theta \hat\Phi)\right] \le C.
\end{equation}
\end{lem}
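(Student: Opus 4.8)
The plan is to reduce the estimate entirely to Lemma~\ref{lemma:lowerbounds}, by first establishing a \emph{deterministic} pointwise bound that is valid on the event $\{\hat Z\le b\}$. Recall that $\hat\Phi_i=\hat Z_i/\mu_i$, so $\hat\Phi=\sum_i\hat Z_i/\mu_i$. Writing $\hat Z_i=\hat Z_i^+-\hat Z_i^-$ and using $\hat Z=\sum_i\hat Z_i\le b$ on the event in question, one gets $\sum_i\hat Z_i^+=\hat Z+\sum_i\hat Z_i^-\le b+\sum_i\hat Z_i^-$, and hence, dropping the negative parts in $\hat\Phi$ and using $1/\mu_i\le 1/\mu_{\min}$,
\begin{align*}
\hat\Phi\le\sum_i\frac{\hat Z_i^+}{\mu_i}\le\frac{1}{\mu_{\min}}\sum_i\hat Z_i^+\le\frac{1}{\mu_{\min}}\Bigl(b+\sum_i\hat Z_i^-\Bigr)
\qquad\text{on }\{\hat Z\le b\}.
\end{align*}
Consequently, for every $\theta\in[0,\theta_1]$,
\begin{align*}
I\{\hat Z\le b\}\Bigl(\sum_i\hat Z_i^-\Bigr)^n\exp(\theta\hat\Phi)
\le\exp\!\Bigl(\tfrac{\theta_1 b}{\mu_{\min}}\Bigr)\Bigl(\sum_i\hat Z_i^-\Bigr)^n\exp\!\Bigl(\tfrac{\theta_1}{\mu_{\min}}\sum_i\hat Z_i^-\Bigr).
\end{align*}

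Next I would absorb the polynomial factor into the exponential using the elementary inequality $x^n\le n!\,e^{x}$ for $x\ge 0$ (applied with $x=\sum_i\hat Z_i^-\ge 0$), which gives
\begin{align*}
\Bigl(\sum_i\hat Z_i^-\Bigr)^n\exp\!\Bigl(\tfrac{\theta_1}{\mu_{\min}}\sum_i\hat Z_i^-\Bigr)\le n!\,\exp\!\Bigl(\bigl(\tfrac{\theta_1}{\mu_{\min}}+1\bigr)\sum_i\hat Z_i^-\Bigr).
\end{align*}
Taking expectations, then $\limsup_{r\to\infty}$, and applying \eqn{eq-finite-exp333} of Lemma~\ref{lemma:lowerbounds} with the constant exponent $\theta=\theta_1/\mu_{\min}+1$ (legitimate precisely because of the hypothesis $\nu_i\le\mu_i$, which is what makes Lemma~\ref{lemma:lowerbounds} available), we obtain
\begin{align*}
\limsup_{r\to\infty}\E\Bigl[I\{\hat Z\le b\}\Bigl(\sum_i\hat Z_i^-\Bigr)^n\exp(\theta\hat\Phi)\Bigr]
\le\exp\!\Bigl(\tfrac{\theta_1 b}{\mu_{\min}}\Bigr)\,n!\prod_i\Bigl[\exp\!\Bigl(\tfrac{\rho_i}{2}\bigl(\tfrac{\theta_1}{\mu_{\min}}+1\bigr)^2\Bigr)+1\Bigr]=:C,
\end{align*}
which is finite, depends only on $b,\theta_1,n$ and the fixed model parameters, and is uniform over $\theta\in[0,\theta_1]$, as required.

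There is essentially no serious obstacle here; the only point requiring a little care is the direction of the inequality in the first display. Since $\hat\Phi$ is being bounded \emph{from above}, no two-sided control on the positive parts $\hat Z_i^+$ is needed beyond what $\{\hat Z\le b\}$ already supplies; the whole of $\hat\Phi$'s potential largeness gets routed into $\sum_i\hat Z_i^-$, which is exactly the quantity controlled by Lemma~\ref{lemma:lowerbounds}. (As in the rest of the paper, the constant $C$ above is explicit, though not optimized.)
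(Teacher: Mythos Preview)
Your proof is correct and follows essentially the same approach as the paper: a deterministic upper bound on $\hat\Phi$ on the event $\{\hat Z\le b\}$ in terms of $\sum_i\hat Z_i^-$, followed by an application of Lemma~\ref{lemma:lowerbounds}. The only cosmetic differences are that the paper obtains the slightly sharper coefficient $1/\mu_{\min}-1/\mu_{\max}$ (instead of your $1/\mu_{\min}$) in front of $\sum_i\hat Z_i^-$, and that you make the absorption of the polynomial factor explicit via $x^n\le n!\,e^x$, whereas the paper leaves that step implicit.
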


\begin{proof}
Inequality
\beql{eq-ineq1}
(1/\mu_{\min}) \sum_i \hat Z_i^+ - (1/\mu_{\max}) \sum_i \hat Z_i^- \ge \hat\Phi
\end{equation}
holds always. Condition $\hat{Z}\le b$ is equivalent to
\beql{eq-ineq2}
\sum_i \hat Z_i^+ - \sum_i \hat Z_i^- \le b.
\end{equation}
Multiplying  \eqn{eq-ineq2} by $(1/\mu_{\min})$  and combining with  \eqn{eq-ineq1},
we get
\beql{eq-ineq3}
\hat\Phi \le (1/\mu_{\min} - 1/\mu_{\max}) \sum_i \hat Z_i^- + (1/\mu_{\min})b.
\end{equation}
Applying bound \eqn{eq-ineq3} to the left-hand side of \eqn{eq-key-bound}, and then using
\eqn{eq-finite-exp333}, we obtain \eqn{eq-key-bound}.
\end{proof}

Finally, we will need the following monotonicity result.
It says, roughly speaking, that for any well-defined system
(i.e., a process satisfying Assumption~\ref{assumption:Basic})
there always exists another well-defined system with smaller
abandonment rates, in which the number of customers of each class is larger.
It is important to note that Lemma~\ref{lem-monotonicity} does {\em not}
imply that this monotonicity works in the ``opposite direction'': namely,
it does {\em not} imply that for any system there exists another system with {\em larger}
abandonment rates, in which the number of customers of each class is {\em smaller}.

\begin{lem}
\label{lem-monotonicity}
Consider a process $X(t)$, satisfying Assumption~\ref{assumption:Basic}
for a given a set of parameters
$r,a,N,\mu_i,\nu_i, i\in I$. 
Suppose we have a modified set of parameters $r,a,N,\mu_i,\nu'_i, i\in I$,
with $\nu_i'\le \nu$ for all $i$.
Then there exists a Markov process $X'(t)$ with associated $Z_i'(t),\Psi_i'(t)$, 
satisfying Assumption~\ref{assumption:Basic}
with the modified parameter set, and such that the following holds.
The processes $X(t)$ and $X'(t)$ can be constructed on
a common probability space in a way such that, first, with probability $1$
$Z_i(t)\le Z'_i(t)$ and $\Psi_i(t)\le \Psi'_i(t)$ for all $i$ and $t$,
and, second, the joint process $(X(t),X'(t))$ is an irreducible 
positive recurrent Markov chain.
\end{lem}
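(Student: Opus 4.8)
The plan is to build the required coupling by defining a single Markov chain $Y(t)=(X(t),\mathbf D(t))$ on $\mathcal X\times\Z_+^{|I|}$, where $D_i(t)$ is interpreted as the number of ``extra'' class-$i$ customers carried by the modified system on top of $X(t)$; I will then take $X'(t):=Y(t)$, $Z_i'(t):=Z_i(t)+D_i(t)$, and define $\Psi_i'(t)$ through a fixed deterministic server-allocation rule: if $\sum_j Z_j(t)\ge N$ (so $X$ is saturated) set $\Psi_i'(t)=\Psi_i(t)$ and leave every extra in queue; if $\sum_j Z_j(t)<N$ set $\Psi_i'(t)=Z_i(t)+E_i(t)$, where $0\le E_i(t)\le D_i(t)$, $\sum_i E_i(t)=\min\bigl(N-\sum_j Z_j(t),\ \sum_j D_j(t)\bigr)$, and the $E_i$ are obtained by assigning the servers left idle by $X$ to the extras in a fixed class-priority order. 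A one-line check shows that, as long as $D_i\ge 0$, this rule yields $\Psi_i\le\Psi_i'\le Z_i'$, $\sum_i\Psi_i'=\min(N,\sum_i Z_i')$ (so $X'$ is non-idling), and, importantly, $Q_i\le Q_i'$.

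The transitions of $Y$ are then prescribed as follows. Arrivals are shared: every class-$\ell$ arrival increments $Z_\ell$ and leaves $\mathbf D$ fixed (hence increments $Z_\ell'$), with the correct total rate $\lambda_\ell r$; reshuffling transitions of $X$ are copied verbatim. Each $X$-transition that decrements $Z_\ell$ --- these have total rate $\mu_\ell\Psi_\ell+\nu_\ell Q_\ell$ --- is split in proportion into a ``service completion'' part of rate $\mu_\ell\Psi_\ell$ and a ``coupled abandonment'' part of rate $\nu_\ell' Q_\ell$, both of which also decrement $Z_\ell'$ and leave $\mathbf D$ fixed, and an ``uncoupled abandonment'' part of rate $(\nu_\ell-\nu_\ell')Q_\ell$, which decrements $Z_\ell$, increments $D_\ell$, and leaves $Z_\ell'$ fixed; this last family is precisely the mechanism that converts a departing customer of $X$ into an extra customer of $X'$, thereby realizing the reduction $\nu_\ell\to\nu_\ell'$. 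Finally, for $X'$ only, I add ``extra service completions'' at rate $\mu_\ell(\Psi_\ell'-\Psi_\ell)$ and ``extra abandonments'' at rate $\nu_\ell'(Q_\ell'-Q_\ell)$, each decrementing both $Z_\ell'$ and $D_\ell$; these are legitimate because $\Psi_\ell'-\Psi_\ell\le D_\ell$ and $Q_\ell'-Q_\ell\le D_\ell$.

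With the chain in hand I would verify three things. (a) Lumpability: for every prescribed transition the aggregate rate of $Y$-transitions inducing a given move of the $X$-coordinate depends only on $\mathbf x$ and equals the rate of $X$, so the $X$-marginal of $Y$ has exactly the original law; moreover $D_i\ge 0$ is preserved by every transition type, so $Z_i\le Z_i'$ and $\Psi_i\le\Psi_i'$ hold for all $t$ w.p.1. (b) $X'=Y$ satisfies Assumption~\ref{assumption:Basic} with the modified parameters: the rate of $Z_\ell'$-incrementing transitions is $\lambda_\ell r$, and the total rate of $Z_\ell'$-decrementing transitions is $\mu_\ell\Psi_\ell+\mu_\ell(\Psi_\ell'-\Psi_\ell)+\nu_\ell' Q_\ell+\nu_\ell'(Q_\ell'-Q_\ell)=\mu_\ell\Psi_\ell'+\nu_\ell' Q_\ell'$, exactly as required, while finiteness of the pre-images $f^{-1}$ for $X'$ follows from $Z_i\le Z_i'$ and the corresponding property of $X$. (c) Positive recurrence and irreducibility: with $V(\mathbf x,\mathbf d)=\sum_i Z_i'/\mu_i$ one computes a drift bounded by $r-\min(N,\sum_i Z_i')$, which is $\le -a\sqrt r<0$ as soon as $\sum_i Z_i'\ge N$, and $\{\sum_i Z_i'\le N\}$ is a finite set of states (again by $Z_i\le Z_i'$ and finiteness of $f^{-1}$), so the same standard argument the paper invokes for $X$ gives positive recurrence; a short reachability argument --- drive $X$ to the empty state, which frees all servers, then drain all extras via extra service completions --- shows $(\mathrm{empty},\mathbf 0)$ is reachable from every state, so $Y$ has a unique essential communicating class on which it is irreducible, and by the remark following Assumption~\ref{assumption:Basic} this suffices.

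The main obstacle is the step of prescribing the transitions of $Y$: it must simultaneously leave the law of $X$ completely undisturbed, make $X'$ dominate $X$ componentwise in both $Z$ and $\Psi$, and produce for $X'$ exactly the modified departure rates $\mu_\ell\Psi_\ell'+\nu_\ell' Q_\ell'$. The two ideas that make this possible are that extras are placed only on servers $X$ leaves idle --- so that a saturated $X$ forces $\Psi_i'=\Psi_i$ and no extra ever displaces a ``real'' customer --- and that the missing abandonments of $X$ are reinterpreted as births of extra customers; once these are in place, the remaining verifications (lumpability, the Lyapunov bound, the reachability argument) are routine bookkeeping.
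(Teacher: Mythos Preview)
Your construction is correct and is essentially the same as the paper's: the paper also augments $X(t)$ by the vector $(Z'_i)$ (equivalently your $D_i=Z'_i-Z_i$), allocates the extra customers to servers left idle by $X$ via an arbitrary fixed rule subject to $\Psi_i\le\Psi'_i$ and $\sum_i\Psi'_i=\min(N,\sum_i Z'_i)$, thins each $X$-departure so that a fraction $(\nu_\ell-\nu'_\ell)Q_\ell/(\mu_\ell\Psi_\ell+\nu_\ell Q_\ell)$ of them leave $Z'_\ell$ unchanged (your ``uncoupled abandonment''), and adds independent $X'$-only departures at rate $\mu_\ell(\Psi'_\ell-\Psi_\ell)+\nu'_\ell(Q'_\ell-Q_\ell)$. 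Your verification of the $X'$-departure rate, of $Q_\ell\le Q'_\ell$, and the Lyapunov/reachability arguments for ergodicity are a bit more explicit than the paper's, but the underlying coupling is identical.
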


Note that the lemma claim that the joint process $(X(t),X'(t))$
is an ergodic Markov chain, allows us to conclude that the steady-state versions
$X(\infty)$ and $X'(\infty)$ can be constructed on a common probability space
in a way such that $Z_i(\infty)\le Z'_i(\infty)$ and $\Psi_i(\infty)\le \Psi'_i(\infty)$ 
for all $i$.

Before giving a proof, we informally describe the simple intuition behind it. 
Given a queueing system corresponding to parameters $r,a,N,\mu_i,\nu_i, i\in I$,
operating under any scheduling policy so that the Assumption 1 is valid, consider a modified system constructed as follows.
Assume for simplicity that $\nu_i >0$ for each $i$.
Upon every abandonment instance of a system 
(which happens with rate $\nu_i$ for each class $i$ customer waiting in the queue), 
customers abandon only with probability $\nu'/\nu$ and with the remaining probability 
$(1-\nu'/\nu)$ 
join a new ''virtual'' class $i$ pool of customers, which we call class $i', i\in I$ customers, with some abuse of notations.
The decision is done independently for all customers.
We assume that the class
$i'$ customers for any $i\in I$ have lower priority than all the customers in classes $j\in I$ and priority mechanism is preemptive
resume. Thus the portion of the system corresponding to classes $j\in I$ only is the same as the original system. The service
times of class $i'$ customers are assumed to be exponentially distributed random variable with parameter $\mu_i$. 
Additionally, class $i'$ customers abandon with rate $\nu'_i$. 
It is easy to see that each queued classes $i$ customer
abandons with rate $\nu_i$; and each queued
customer of class $i$ or $i'$ 
abandons with rate $\nu'_{i}$, {\em if we do not count an $i$-to-$i'$ conversion
as an abandonment}.
This mechanism induces a new queueing system such 
$Z_{i}(t)\le Z'_{i}(t) \equiv Z_{i}(t)+Z_{i'}(t)$ (and similarly for $\Psi$). 

\begin{proof}
The states $X'(t)$ of the modified process are defined 
as augmented states $X(t)$, as follows:
$$
X'(t) = \left[X(t), (Z'_i(t), ~i\in I)\right],
$$
where
$(Z'_i(t), ~i\in I)$ is a vector such that $Z_i(t)\le Z'_i(t)$ for all $i$.
(Recall that $X(t)$ uniquely
determines the associated $Z_i(t), \Psi_i(t), ~i\in I$.
Therefore, corresponding to any state of $X(t)$ there is a countable set
of states of $X'(t)$.) Now,
consider an arbitrary fixed function, that maps 
$X'(t)$ into an integer vector $(\Psi'_i(t), ~i\in I)$, while satisfying
the following conditions: $\Psi_i(t)\le \Psi'_i(t)$ for all $i$;
$\sum_i \Psi'_i(t) = \min\{N, \sum_i Z'_i(t)\}$.
(The deterministic mapping of $X'(t)$ into $(Z'_i(t), ~i\in I)$ is the projection.)
Recall notation $Q_i(t)=Z_i(t)-\Psi_i(t)$, and denote $Q'_i(t)=Z'_i(t)-\Psi'_i(t)$.
Observe that we always have $Q_i(t)\le Q'_i(t)$; 
indeed, if $\sum_i \Psi_i(t) < N$ then all $Q_i(t)=0$, 
and if $\sum_i \Psi_i(t) = N$, 
then by construction $\sum_i \Psi'_i(t)\ge \sum_i \Psi_i(t) = N$, which coupled with $\sum_i \Psi'_i(t)\le N$
implies $\Psi_i(t)= \Psi'_i(t)$ for all $i$.

We will now define the transition rates of process $X'(t)$;
they will be such that the order relations $Z_i(t)\le Z'_i(t)$ are preserved.
We will use time indices $t-$ and $t$ for a state just before and just after a transition
at time $t$. The transitions will be of two types. The first type transitions are ``driven''
by those of process $X$, and have the same rates. When $X$ makes a transition 
from $X(t-)$ to $X(t)$, such that $Z_i(t)=Z_i(t-)$ for all $i$
(no ``arrivals'' or ``departures'' in the $X$-process), or $Z_i(t)=Z_i(t-)+1$
for one of the $i$ (an ``arrival'' in the $X$-process), 
then $X'(t)=(X(t), (Z'_i(t), ~i\in I))$, where each
$Z'_i(t)=Z'_i(t-) + Z_i(t)-Z_i(t-)$. (Simply put, in $X'$ we make the same change
of $X$-component and increment $Z'_i$'s in the same way.)
When, $X$ makes a transition 
from $X(t-)$ to $X(t)$, such that $Z_i(t)=Z_i(t-)-1$
for one of the $i$ (a ``departure'' in the $X$-process), 
say $i=j$, then $X'(t)=(X(t), (Z'_i(t), ~i\in I))$,
where $Z'_i(t)=Z'_i(t-)$ for all $i\ne j$, and $Z'_i(t)$ is set to
$Z'_i(t-)$ or $Z'_i(t-)-1$ with probabilities
$$
p=\frac{\nu_i Q_i(t-)}{\nu_i Q_i(t-)+\mu_i \Psi_i(t-)} \frac{\nu_i-\nu'_i}{\nu_i} 
~~\mbox{and}~~ 1-p,
$$
respectively. (In the expression for $p$ the $\nu_i$ in both the numerator
and denominator cancels out -- we left it there to make the meaning of 
probability $p$ transparent. In particular, the expression does {\em not}
require that $\nu_i>0$. We also do not need to ``worry'' about the 
case when $\nu_i Q_i(t-)+\mu_i \Psi_i(t-)=0$, because in this case
a ``departure of class $i$'' transition in $X$ is impossible,
and we only define $p$ for such transtions.)

The second type transitions are associated with each customer class $j$
(these are the ``departures'' in the $X'$-process only);
namely, with the rate $\nu'_j (Q'_j(t-)-Q_j(t-))+\mu_j (\Psi'_j(t-)-\Psi_j(t-))$
the transition form $X'(t-)$ to $X'(t)$ is such that
$X(t)=X(t-)$, $Z'_j(t)=Z'_j(t-)-1$, and $Z'_i(t)=Z'_i(t-)$ for all $i\ne j$.

The Markov chains $X'(t)$ and (its projection) $X(t)$ are constructed on the 
same probability space. The order relations $Z_i(t)\le Z'_i(t)$ are preserved
for all $i$ at all times, by construction. 
By construction the process $X(t)$ is defined ``as itself''.
It it easily checked that the process
$X'(t)$ satisfies
Assumption~\ref{assumption:Basic},
including the irreducibility condition.
Therefore, $X'(t)$ is positive recurrent.
Since $X(t)$ is a projection of $X'(t)$, the process $X'(t)$
itself can be taken as the joint process $(X(t),X'(t))$.
The proof is complete
\end{proof}

\section{Proof of Theorem~\ref{th-main-no-abandonments}(i) and (ii)}
\label{section:mainresult1}

Statement (i) follows from Lemma~\ref{lemma:lowerbounds}.

To prove statement (ii) it suffices by to Lemma~\ref{lem-monotonicity}
to consider the case when all $\nu_i=0$, which is what we assume for the
rest of the proof.
Consider the process in the stationary regime, i.e. $\mb{x}$ is
random with the distribution
equal to the stationary distribution $\pi$
of the Markov chain. First, for every state $\mb{x}\in\mathcal{X}$ we have
$$
\bar{A} \hat\Phi (\mb{x}) = \frac{1}{\sqrt{r}}
[\sum_i (\lambda_i r)/\mu_i - \sum_i \Psi_i(\mb{x})] = - \hat{Z}_a(\mb{x}),
$$
where $\hat{Z}_a(\mb{x}) = \min\{\hat{Z}(\mb{x}),a\}$. We remark that this expression does {\em not} require chain $X(\cdot)$ to be in steady-state.

Consider a (candidate) Lyapunov function $\exp(\theta \hat\Phi(\mb{x}))$ for every state $\mb{x}\in\mathcal{X}$, where  $\theta>0$ is for a moment arbitrary.
We use the Taylor expansion
\begin{align*}
\exp(\theta y)=\exp(\theta x)(1+\theta(y-x)+(1/2)\theta^2(y-x)^2\exp(\theta z)),
\end{align*}
for some $z$ satisfying $|z|\le |y-x|$.
Then $\exp(\theta y)\le \exp(\theta x)(1+\theta(y-x)+(1/2)\theta^2(y-x)^2\exp(\theta |y-x|))$.
We will apply this for $x=\hat\Phi(\mb{x}),y=\hat\Phi({\bf y})$, where
${\bf y} \in T(\mb{x})$.
We obtain
\begin{align}
\label{eq-long}
\bar{A} \exp(\theta \hat\Phi(\mb{x})) \le \exp(\theta \hat\Phi(\mb{x}))\Big[\theta \bar{A} \hat\Phi (\mb{x})
+(1/2)\theta^2 [\sum_i \lambda_i r + (r+a\sqrt{r})\mu_{\max}] (\mu_{\min}^{-1}/\sqrt{r})^2 \exp(\theta \mu_{\min}^{-1}/\sqrt{r})\Big]
\end{align}
where we use the fact that $|y-x| \le
1/(\mu_{\min}\sqrt{r})$, and $\sum_i \lambda_i r + (r+a\sqrt{r})\mu_{\max}$ is a crude
upper bound on the maximum transition rate (involving a customer arrival or
departure).
We obtain
\begin{align}
\label{eq-drift}
\bar{A} \exp(\theta \hat\Phi(\mb{x})) \le \exp(\theta \hat\Phi(\mb{x}))
\Big[- \theta \hat{Z}_a(\mb{x})
+(1/2)\theta^2 c_1 \Big],
\end{align}
where, uniformly on all $r\ge 1$ and all $\theta\le 1$,
\begin{align*}
c_1=\left(\sum_i \lambda_i + (1+a)\mu_{\max}\right)\mu_{\min}^{-2}\exp(\mu_{\min}^{-1}))\ge
\left(\sum_i \lambda_i + (1+a/\sqrt{r})\mu_{\max}\right)\mu_{\min}^{-2}\exp(\theta \mu_{\min}^{-1}/\sqrt{r}).
\end{align*}
In the rest of this proof we only consider $\theta \le 1$.

Let $\hat\Phi_{(k)}(\mb{x})\triangleq \min\{\hat\Phi(\mb{x}), k\}$.
Clearly, $\exp(\theta \hat\Phi_{(k)}(\mb{x}))$ is constant everywhere
except a finite subset of state space $\mathcal{X}$,
because (Assumption~\ref{assumption:Basic})
there is only a finite number of states ${\bf x}\in \mathcal{X}$
for which $\sum \hat Z_i(\mb{x})/\mu_i <k$.
Therefore, \eqn{eq:ExpectedGeneratorZero}
holds for the function $\exp(\theta \hat\Phi_{(k)}(\mb{x}))$.

Observe that $\hat \Phi(\mb{x}) \ge k$ implies
$\bar A \exp(\theta \hat\Phi_{(k)}(\mb{x})) \le 0$,
and $\hat \Phi(\mb{x}) < k$ implies
$\bar A \exp(\theta \hat\Phi_{(k)}(\mb{x}))
\le \bar A \exp(\theta \hat\Phi(\mb{x}))$.
Then, from~\eqn{eq-drift} we obtain:
\begin{align}
\bar A\exp(\theta \hat\Phi_{(k)}(\mb{x}))
&\le I\{\hat\Phi(\mb{x})<k\} I\{\hat Z(\mb{x}) \ge a\} \exp(\theta \hat\Phi(\mb{x}))\Big[-\theta a+(1/2)\theta^2 c_1\Big] \label{eq-drift-tr2}\\
&+I\{\hat Z(\mb{x}) < 0\} \exp(\theta \hat\Phi(\mb{x}))\Big[-\theta \hat Z(\mb{x}) +(1/2)\theta^2 c_1\Big]  \label{eq-drift-tr3}\\
&+I\{0\le \hat Z(\mb{x}) < a\} \exp(\theta \hat\Phi(\mb{x}))\Big[(1/2)\theta^2 c_1\Big]. \label{eq-drift-tr4}
\end{align}
For the process in steady state, all terms \eqn{eq-drift-tr2}-\eqn{eq-drift-tr4}
have finite expected values for each $k$ and $r$, including the left-hand side of (\ref{eq-drift-tr2}). Indeed, the left- and right-hand sides
of
\eqn{eq-drift-tr2} are bounded, and \eqn{eq-drift-tr3}
and \eqn{eq-drift-tr4} are non-negative and
have finite expectations by Lemma~\ref{lem-key-bound}. We now take the expectation
w.r.t. the stationary distribution $\pi$ on $\mathcal{X}$ of both sides,
 use
$\E[\bar A\exp(\theta \hat\Phi_{(k)}(\mb{x}))]=0$ (which follows from (\ref{eq:ExpectedGeneratorZero})), and cancel $\theta$.
Recalling our notation $\hat\Phi=\hat \Phi^r(\infty)$, which is the same as $\hat\Phi(\mb{x})$, where
$\mb{x}$ is distributed according to the stationary measure $\pi$, we obtain
\begin{align}
\label{eq-drift-tr22}
\E \left[I \{\hat\Phi<k\} I\{\hat Z \ge a\} \exp(\theta \hat\Phi) (a-(1/2)\theta c_1)\right]
\le
\end{align}
\begin{align}
\label{eq-drift-tr33}
\E \left[I\{\hat Z < 0\} \exp(\theta \hat\Phi)\Big[-\hat Z+(1/2)\theta c_1\Big]\right] +
\end{align}
\begin{align}
\label{eq-drift-tr44}
\E \left[I\{0\le \hat Z < a\} \exp(\theta \hat\Phi)\Big[(1/2)\theta c_1\Big]\right].
\end{align}
Now, let
$\bar\theta=\min\{a/c_1,1\}$.
Then
By Lemma~\ref{lem-key-bound}, we have
\begin{align*}
&\limsup_r\E \left[I\{\hat Z < 0\} \exp(\theta \hat\Phi)\Big[-\hat Z+(1/2)\theta c_1\Big]\right] \le C_2(a,c_1),\\
&\limsup_r\E \left[I\{0\le \hat Z < a\} \exp(\theta \hat\Phi)\Big[(1/2)\theta c_1\Big]\right]\le C_2(a,c_1),
\end{align*}
for some $C_2(a,c_1)<\infty$.
On the other hand $\bar\theta=a/c_1$ implies
that for any $\theta\in [0,\bar\theta]$,
$a-(1/2)\theta c_1>a/2>0$.
Thus
\begin{align*}
\limsup_r\E \left[I \{\hat\Phi<k\} I\{\hat Z \ge a\} \exp(\theta \hat\Phi)\right]\le C_3(a,c_1)<\infty,
\end{align*}
for some $C_3(a,c_1)$. Observe that this bound does not depend on $k$. Thus fix $r_0$ large enough so that for all $r\ge r_0$
\begin{align*}
\E \left[I \{\hat\Phi<k\} I\{\hat Z \ge a\} \exp(\theta \hat\Phi)\right]\le 2C_3(a,c_1).
\end{align*}
By Monotone Convergence Theorem, taking $k\rightarrow\infty$, we obtain
\begin{align*}
\E \left[ I\{\hat Z \ge a\} \exp(\theta \hat\Phi)\right]\le 2C_3(a,c_1),
\end{align*}
namely
\begin{align*}
\limsup_r\E \left[ I\{\hat Z \ge a\} \exp(\theta \hat\Phi)\right]<\infty.
\end{align*}
Again applying Lemma~\ref{lem-key-bound}, we have
\begin{align*}
\limsup_r\E \left[ I\{\hat Z < a\} \exp(\theta \hat\Phi)\right]<\infty.
\end{align*}
Combining, we obtain $\limsup_r\E \left[\exp(\theta \hat\Phi)\right]<\infty.$

To complete the proof of the theorem, take any $\theta<\bar\theta$ and
$p=p(\theta)>1$ and such that $p\theta < \bar\theta$; and then $q$ such that $1/p+1/q=1$.
Using Holder inequality, we have
$$
\E  \exp (\theta \sum_i (1/\mu_i) \hat Z_i^+)
= \E \left[ \exp(\theta \hat\Phi)  \exp (\theta \sum_i (1/\mu_i) \hat Z_i^-)  \right]
\le \left[\E \exp(p\theta \hat\Phi) \right]^{1/p}
\left[\E \exp (q\theta \sum_i (1/\mu_i) \hat Z_i^-) \right]^{1/q}.
$$
By Lemma~\ref{lemma:lowerbounds} we have
\begin{align*}
\limsup_r\E\left[ \exp (q\theta \sum_i (1/\mu_i) \hat Z_i^-)\right] <\infty,
\end{align*}
from which we obtain
\begin{align*}
\limsup_r\E\left[  \exp (\theta \mu_{\max}^{-1}\sum_i  \hat Z_i^+)\right]<\infty.
\end{align*}
This proves \eqn{eq-est3332}. The proof of
Theorem~\ref{th-main-no-abandonments}(ii) is complete.

\section{Proof of Theorem~\ref{th-main-no-abandonments}(iii)}
\label{section:mainresult13333}

\subsection{Auxilliary results}

\begin{lem}
\label{lem-sum-Abar-zero}
For any $\theta\ge 0$ and any $r$, the stationary distribution
is such that
\beql{eq-mean-abar-zero}
\E \bar A \exp(-\theta \hat\Phi) = \sum_{\mb{x}} \pi(\mb{x}) \bar A \exp(-\theta \hat\Phi(\mb{x}))
= 0.
\end{equation}
\end{lem}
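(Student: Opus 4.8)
The plan is to reduce the claim to the already-established identity \eqn{eq:ExpectedGeneratorZero} via a truncation and a monotone/dominated convergence argument. The function $\exp(-\theta\hat\Phi(\mb{x}))$ is bounded (by $1$, since $\hat\Phi\ge -\sum_i\rho_i r/\mu_i \cdot r^{-1/2}$... actually it is bounded above by $1$ only where $\hat\Phi\ge 0$, but in general $\hat\Phi$ is bounded below, hence $\exp(-\theta\hat\Phi)$ is bounded above on the whole state space by a constant depending on $r$), so there is no integrability issue with $\E\exp(-\theta\hat\Phi)$ itself. The only obstacle is that $\exp(-\theta\hat\Phi)$ is not constant outside a finite set, so \eqn{eq:ExpectedGeneratorZero} does not apply to it directly; we must approximate.

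First I would introduce the truncation $g_k(\mb{x}) = \exp(-\theta\min\{\hat\Phi(\mb{x}),k\})$ (or, more conveniently here, truncate from below is unnecessary since $\hat\Phi$ is bounded below; truncating $\hat\Phi$ from above at level $k$ makes $g_k$ constant on the set $\{\hat\Phi\ge k\}$, and by Assumption~\ref{assumption:Basic} there are only finitely many states with $\hat\Phi(\mb{x})<k$). Hence $g_k$ is constant off a finite set, is in the domain of $A$, and \eqn{eq:ExpectedGeneratorZero} gives $\E\bar A g_k(X(\infty)) = 0$ for every $k$. Next I would show $\bar A g_k(\mb{x}) \to \bar A \exp(-\theta\hat\Phi(\mb{x}))$ pointwise as $k\to\infty$: for any fixed $\mb{x}$, once $k$ exceeds $\hat\Phi(\mb{x})$ and $\hat\Phi(\mb{y})$ for all $\mb{y}\in T(\mb{x})$ (a finite set), $g_k$ agrees with $\exp(-\theta\hat\Phi)$ at $\mb{x}$ and all its neighbors, so the finite sum defining $\bar A g_k(\mb{x})$ equals $\bar A\exp(-\theta\hat\Phi(\mb{x}))$ exactly.

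Then the crux is to pass the limit through the expectation $\sum_{\mb{x}}\pi(\mb{x})\bar A g_k(\mb{x})$. For this I would produce a $\pi$-integrable dominating function for $|\bar A g_k(\mb{x})|$, uniformly in $k$. Since each transition changes $\hat\Phi$ by at most $1/(\mu_{\min}\sqrt r)$, a Taylor/mean-value estimate analogous to \eqn{eq-long} gives $|\bar A g_k(\mb{x})| \le \exp(-\theta\hat\Phi(\mb{x})\wedge k)\cdot[\theta|\bar A\hat\Phi_{(k)}(\mb{x})| + (1/2)\theta^2 c_1] \le \exp(-\theta\hat\Phi(\mb{x}))(\theta a + (1/2)\theta^2 c_1)$, using that $|\bar A\hat\Phi_{(k)}|\le a$ (as $\bar A\hat\Phi = -\hat Z_a$ with $|\hat Z_a|\le a$ where $\hat Z\le a$, and the truncation only makes this smaller) and that the second-order term is bounded by $(1/2)\theta^2 c_1$ as in \eqn{eq-drift}. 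The dominating function $\exp(-\theta\hat\Phi(\mb{x}))(\theta a + (1/2)\theta^2 c_1)$ is $\pi$-integrable precisely because $\E\exp(-\theta\hat\Phi) < \infty$ (finite for each fixed $r$ since $\hat\Phi$ is bounded below). I would then invoke the Dominated Convergence Theorem to conclude $0 = \lim_k \E\bar A g_k(X(\infty)) = \E\bar A\exp(-\theta\hat\Phi(X(\infty)))$, which is \eqn{eq-mean-abar-zero}. The main obstacle is nailing down the uniform-in-$k$ domination cleanly, in particular that $\exp(-\theta\hat\Phi\wedge k)\le \exp(-\theta\hat\Phi)$ for $\theta\ge 0$ and that $|\bar A\hat\Phi_{(k)}|$ stays bounded by $a$; everything else is bookkeeping already carried out in Section~\ref{section:mainresult1}.
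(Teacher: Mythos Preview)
Your overall plan---truncate $\hat\Phi$ at level $k$, apply \eqn{eq:ExpectedGeneratorZero} to $g_k=\exp(-\theta\hat\Phi_{(k)})$, and pass to the limit by dominated convergence---is exactly the paper's route. The gap is in your domination step. You invoke the identity $\bar A\hat\Phi=-\hat Z_a$ and the second-order constant $c_1$ from \eqn{eq-drift}, but both were derived in Section~\ref{section:mainresult1} under the standing assumption $\nu_i=0$ for all $i$. Lemma~\ref{lem-sum-Abar-zero} lives in Section~\ref{section:mainresult13333}, where abandonments are present; there
\[
\bar A\hat\Phi(\mb{x})=-\hat Z_a(\mb{x})-\sum_i(\nu_i/\mu_i)\hat Q_i(\mb{x}),
\]
and the total transition rate out of $\mb{x}$ picks up the extra term $\sum_i\nu_i Q_i(\mb{x})$. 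Neither quantity is bounded independently of the state, so your proposed majorant $\exp(-\theta\hat\Phi)(\theta a+(1/2)\theta^2 c_1)$ does not actually dominate $|\bar A g_k|$. (Two smaller slips: $\hat Z_a=\min\{\hat Z,a\}$ does \emph{not} satisfy $|\hat Z_a|\le a$, since $\hat Z$ can be large negative; and for $\theta\ge 0$ one has $\exp(-\theta(\hat\Phi\wedge k))\ge\exp(-\theta\hat\Phi)$, the reverse of what you wrote.)

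The paper repairs this by observing that the total transition rate out of $\mb{x}$ is at most linear in $\hat\Phi(\mb{x})$ once $\hat\Phi(\mb{x})\ge c$ for some fixed $c>0$, because $Z$ is bounded above linearly by $\hat\Phi$. Hence for all such $\mb{x}$ and all $k\ge c$,
\[
\left|\bar A\exp(-\theta\hat\Phi(\mb{x}))\right|,\ \left|\bar A\exp(-\theta\hat\Phi_{(k)}(\mb{x}))\right|\ \le\ (c_{20}\hat\Phi(\mb{x})+c_{21})\exp(-\theta\hat\Phi(\mb{x})),
\]
and the right side is a \emph{bounded} function of $\hat\Phi(\mb{x})$. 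Since only finitely many states have $\hat\Phi<c$, both expressions are uniformly bounded over all $\mb{x}$ and $k$; dominated convergence with a constant majorant then finishes the argument exactly as you intended. So your strategy is correct---what is missing is the observation that the linear growth of the transition rate is absorbed by the exponential decay $\exp(-\theta\hat\Phi)$.
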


\begin{proof}
The case $\theta=0$ is trivial.
For the rest of the proof, consifer a {\em fixed pair} of
$\theta>0$ and $r$. (The chosen constants may depend on $\theta$ and $r$.)

Consider a fixed $c>0$ and any state $\mb{x}$ such that $\hat\Phi(\mb{x})\ge c$. Then,
it is easy to observe that for some fixed $c_{20}>0$, $c_{21}>0$, and any $k\ge c$,
\beql{eq-Abar-bound}
\left| \bar A \exp(-\theta \hat\Phi(\mb{x})) \right|
\le (c_{20} \hat\Phi(\mb{x}) + c_{21}) \exp(-\theta \hat\Phi(\mb{x})),
\end{equation}
\beql{eq-Abar2-bound}
\left| \bar A \exp(-\theta \hat\Phi_{(k)}(\mb{x})) \right|
\le (c_{20} \hat\Phi(\mb{x}) + c_{21}) \exp(-\theta \hat\Phi(\mb{x})).
\end{equation}
Indeed, the total rate
of all transitions out of state $\mb{x}$ that may change the value of $\hat\Phi$
is upper bounded by $c_{20} \hat\Phi(\mb{x}) + c_{21}$ for some constants
$c_{20}>0, c_{21}>0$. (From $\hat \Phi = \sum_i [Z_i - \rho_i r]/(\mu_i \sqrt{r})$
we get an upper bound on $Z$, linear in $\hat \Phi$, which gives the bound
on the customer departure rate; the arrival rate is constant.)
Also $\hat\Phi(\mb{y}) \ge \hat\Phi(\mb{x}) - 1/(\mu_{\min}\sqrt{r})
\ge \hat\Phi(\mb{x}) - 1/\mu_{\min}$
for any state $\mb{y}\in T(\mb{x})$.
Therefore,
$$
\left| \bar A \exp(-\theta \hat\Phi(\mb{x})) \right|
\le (c_{20} \hat\Phi(\mb{x}) + c_{21}) 2 \exp[-\theta (\hat\Phi(\mb{x})-1/\mu_{\min})],
$$
where the factor $2\exp(\theta/\mu_{\min})$ can be dropped by rechoosing
$c_{20}, c_{21}$. This proves \eqn{eq-Abar-bound}, and \eqn{eq-Abar2-bound}
is proved the same way.

From \eqn{eq-Abar-bound} and \eqn{eq-Abar2-bound} we conclude
that the values of
$$
\left| \bar A \exp(-\theta \hat\Phi(\mb{x})) \right|
~\mbox{and}~
\left| \bar A \exp(-\theta \hat\Phi_{(k)}(\mb{x})) \right|
$$
are uniformly bounded for all $\mb{x}$ and $k$. (Recall that the
number of states with $\hat\Phi(\mb{x})< c$ is finite, for any $c$.)
This means, in particular, that the sum in \eqn{eq-mean-abar-zero}
converges absolutely.

We know that
\beql{eq12345}
\E \bar A \exp(-\theta \hat\Phi_{(k)})
= 0,
\end{equation}
by \eqn{eq:ExpectedGeneratorZero}, because
$\exp(-\theta \hat\Phi_{(k)})$
is constant outside a finite set of states. The LHS of \eqn{eq12345}
can be written as
\beql{eq12345-2}
\E \bar A \exp(-\theta \hat\Phi_{(k)}) =
\sum_{B_1(k)} \pi(\mb{x}) \bar A \exp(-\theta \hat\Phi(\mb{x}))
+ \sum_{B_2(k)} \pi(\mb{x}) \bar A \exp(-\theta \hat\Phi_{(k)}(\mb{x})),
\end{equation}
where $B_1(k)$ is the set of states $\mb{x}$ such that
$\mb{x}$ and all $\mb{y}\in T(\mb{x})$, are within set $B(k)=\{w~|~\hat\Phi(w)<k\}$
(so that $B_1(k) \subseteq B(k)$;
$B_2(k)$ is the set of ``boundary'' states $\mb{x}$ for which
there exists $\mb{y}\in T(\mb{x})$ such that
either $\mb{x}\in B(k)$ or $\mb{y}\in B(k)$, but not both.
As $k\to\infty$, the sum over $B_1(k)$ converges to the sum in \eqn{eq-mean-abar-zero},
and the
sum over $B_2(k)$ vanishes. This proves \eqn{eq-mean-abar-zero}.
\end{proof}

\begin{lem}
\label{lem-key-bound55}
For arbitrary  $b\ge 0$ and $n\in\Z_+$, there exist $\theta_2=\theta_2(b,n)>0$
and $C=C(b,n)$ such that
for all $\theta \in [0,\theta_2]$,
\beql{eq-key-bound55}
\limsup_{r\rightarrow\infty} \E \left[I\{\hat{Z}\ge -b\} (\sum_i \hat Z_i^+)^n \exp(-\theta \hat\Phi)\right] \le C.
\end{equation}
\end{lem}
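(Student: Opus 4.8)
The plan is to mirror the proof of Lemma~\ref{lem-key-bound}, with the role of the $M/M/\infty$ lower bound \eqn{eq-finite-exp333} replaced by the upper-tail estimate \eqn{eq-est3332} of Theorem~\ref{th-main-no-abandonments}(ii). Note that, in contrast with Lemma~\ref{lem-key-bound}, there is no hypothesis $\nu_i\le\mu_i$ here, and correspondingly $\theta_2$ will be forced to be small, tied to the constant $\bar\theta$ of Theorem~\ref{th-main-no-abandonments}(ii); this is legitimate, since statement (iii), for which this lemma is auxiliary, is proved after (ii).

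First I would record the deterministic inequality dual to \eqn{eq-ineq1}. Using $1/\mu_i\ge 1/\mu_{\max}$ on the positive parts and $1/\mu_i\le 1/\mu_{\min}$ on the negative parts in $\hat\Phi=\sum_i\hat Z_i/\mu_i=\sum_i\hat Z_i^+/\mu_i-\sum_i\hat Z_i^-/\mu_i$ yields
\[
\hat\Phi\ \ge\ \frac{1}{\mu_{\max}}\sum_i\hat Z_i^+\ -\ \frac{1}{\mu_{\min}}\sum_i\hat Z_i^-.
\]
The event $\{\hat Z\ge -b\}$ is equivalent to $\sum_i\hat Z_i^-\le\sum_i\hat Z_i^++b$, so on this event
\[
-\hat\Phi\ \le\ \Bigl(\frac{1}{\mu_{\min}}-\frac{1}{\mu_{\max}}\Bigr)\sum_i\hat Z_i^+\ +\ \frac{b}{\mu_{\min}},
\]
which is precisely \eqn{eq-ineq3} with $+$ and $-$ interchanged and the sign reversed.

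Next I would absorb the polynomial factor and invoke (ii). Writing $\kappa=1/\mu_{\min}-1/\mu_{\max}\ge 0$ (a local abbreviation), the displayed bound gives, for every $\theta\ge 0$,
\[
I\{\hat Z\ge -b\}\,\Bigl(\sum_i\hat Z_i^+\Bigr)^{n}\exp(-\theta\hat\Phi)\ \le\ e^{\theta b/\mu_{\min}}\,\Bigl(\sum_i\hat Z_i^+\Bigr)^{n}\exp\Bigl(\theta\kappa\sum_i\hat Z_i^+\Bigr).
\]
Using the elementary inequality $s^{n}\le (n/(e\varepsilon))^{n}e^{\varepsilon s}$ for $s\ge 0$, I would fix $\theta_2>0$ and $\varepsilon>0$ with $\theta_2\kappa+\varepsilon\le\bar\theta$ (for instance $\varepsilon=\bar\theta/2$, $\theta_2=\bar\theta/(2(1+\kappa))$); then for all $\theta\in[0,\theta_2]$ the right-hand side above is at most a constant (depending only on $b$ and $n$) times $\exp(\bar\theta\sum_i\hat Z_i^+)$. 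Taking expectation under the stationary distribution $\pi$ and then $\limsup_{r\to\infty}$, the claim \eqn{eq-key-bound55} follows from \eqn{eq-est3332}, with $C=C(b,n)$ the resulting finite constant.

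There is no genuine obstacle here; the argument is a routine dualization of Lemma~\ref{lem-key-bound}. The one point requiring care is that \eqn{eq-est3332} only guarantees a finite exponential moment of $\sum_i\hat Z_i^+$ up to exponent $\bar\theta$, so $\theta$ must be restricted to a bounded interval $[0,\theta_2]$, unlike in Lemma~\ref{lem-key-bound} where all $\theta\ge 0$ were admissible. The degenerate case $\mu_{\min}=\mu_{\max}$ (i.e.\ $\kappa=0$) is automatically included: then $-\hat\Phi\le b/\mu_{\min}$ on $\{\hat Z\ge -b\}$, and only $\E[(\sum_i\hat Z_i^+)^{n}]$ needs to be controlled, again via \eqn{eq-est3332}.
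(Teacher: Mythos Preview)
Your proof is correct and follows essentially the same approach as the paper: derive the dual of \eqn{eq-ineq3} to get $-\hat\Phi \le (1/\mu_{\min}-1/\mu_{\max})\sum_i \hat Z_i^+ + b/\mu_{\min}$ on the event $\{\hat Z\ge -b\}$, then invoke \eqn{eq-est3332} from Theorem~\ref{th-main-no-abandonments}(ii). Your treatment is in fact more explicit than the paper's (which simply says ``similarly to \eqn{eq-ineq3}'' and ``then using \eqn{eq-est3332}''), in particular in spelling out how the polynomial factor $(\sum_i\hat Z_i^+)^n$ is absorbed into the exponential and why $\theta_2$ must be tied to $\bar\theta$.
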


\begin{proof}
The proof is analogous to that of Lemma~\ref{lem-key-bound},
except instead of relying on the bound \eqn{eq-finite-exp333},
which involves $Z_i^-$ and holds when all $\nu_i\le\mu_i$,
we will use bound \eqn{eq-est3332}, which involves $Z_i^+$ and holds
for arbitrary $\nu_i\ge 0$.
Namely,
from $\hat{Z}\ge -b$ we obtain
\beql{eq-ineq355}
-\hat\Phi \le (1/\mu_{\min} - 1/\mu_{\max}) \sum_i \hat Z_i^+ + (1/\mu_{\min})b
\end{equation}
similarly to \eqn{eq-ineq3}.
Then applying bound \eqn{eq-ineq355} to the left-hand side of \eqn{eq-key-bound55}, and then using \eqn{eq-est3332}, we obtain \eqn{eq-key-bound55}.
\end{proof}

\subsection{Proof of Theorem~\ref{th-main-no-abandonments}(iii)}

For every state $\mb{x}\in\mathcal{X}$ we have
\begin{align*}
\bar{A} \hat\Phi (\mb{x}) &= \frac{1}{\sqrt{r}}
[\sum_i (\lambda_i r)/\mu_i - \sum_i \Psi_i(\mb{x}) -\sum_i \nu_i Q_i(\mb{x}] =
- \hat{Z}_a(\mb{x}) -\sum_i \nu_i \hat Q_i(\mb{x}),
\end{align*}

Consider a (candidate) Lyapunov function $\exp(-\theta \hat\Phi(\mb{x}))$,
with $\theta>0$. Then, analogously to \eqn{eq-long}
we obtain
\begin{align}
\label{eq-long55}
\bar{A} \exp(-\theta \hat\Phi(\mb{x})) &\le \exp(-\theta \hat\Phi(\mb{x}))\Big[-\theta \bar{A} \hat\Phi (\mb{x})\\
&+(1/2)\theta^2 [\sum_i \lambda_i r + (r+a\sqrt{r})\mu_{\max} +\nu_{\max} \sum_i Q_i(\mb{x})]
(\mu_{\min}^{-1}/\sqrt{r})^2 \exp(\theta \mu_{\min}^{-1}/\sqrt{r})\Big], \notag
\end{align}
where the crude estimate of the maximum total transition rate in the $\theta^2$ term
accounts for the transitions due to abandonments as well.
In turn, \eqn{eq-long55} can be rewritten as
\begin{align}
\label{eq-drift5566}
\bar{A} \exp(-\theta \hat\Phi(\mb{x}))
\le \exp(-\theta \hat\Phi(\mb{x}))
\Big[ \theta \hat{Z}_a(\mb{x}) + \theta \sum_i \nu_i \hat Q_i(\mb{x})
+\theta^2 [c_{11}+c_{12} \hat Z^+] \Big],
\end{align}
where $c_{11}>0$ and $c_{12}>0$ are constants (independent or $r$),
and the inequality holds for all sufficiently large $r$.
Finally, we fix arbitrary $b>0$ and rewrite \eqn{eq-drift5566} as
\begin{align}
\label{eq-drift5577}
\bar{A} \exp(-\theta \hat\Phi(\mb{x}))
\le I\{\hat Z <-b\}
\exp(-\theta \hat\Phi(\mb{x}))
\Big[ \theta (-b)
+\theta^2 c_{11} \Big] +
\end{align}
\begin{align}
\label{eq-drift5588}
I\{\hat Z \ge -b\}
\exp(-\theta \hat\Phi(\mb{x}))
\Big[ \theta a + \theta c_{13} \sum_i \hat Z_i^+(\mb{x})
+\theta^2 [c_{11}+c_{14} \sum_i \hat Z_i^+(\mb{x})] \Big],
\end{align}
where $c_{13}>0$ and $c_{14}>0$ are constants independent or $r$.

We now consider our system in steady state.
The LHS of \eqn{eq-drift5577} has expected value $0$ by Lemma~\ref{lem-sum-Abar-zero}.
For all sufficiently small $\theta>0$, \eqn{eq-drift5588} has 
expectation bounded uniformly in $r$ by Lemma~\ref{lem-key-bound55}. Further, let us restrict
ourselves only to small values of $\theta$ such that
$b-\theta c_{11} >c_{15}$ for an arbitrary fixed constant $0< c_{15}<b$.
Then, the RHS of \eqn{eq-drift5577} has a well defined, strictly negative
expected value. (At this point we do not claim that the
 expectation is finite -- that will follow shortly.)
We take the expectation of both parts of \eqn{eq-drift5577}-\eqn{eq-drift5588},
cancel $\theta$, move (what is left of) the term \eqn{eq-drift5577}
to the LHS of the inequality, and divide the inequality by
$b-\theta c_{11}>c_{15}>0$.
Letting $r\to\infty$ and
applying Lemma~\ref{lem-key-bound55}
gives us
\begin{align*}
\limsup_r\E \left[ I\{\hat Z < -b\} \exp(-\theta \hat\Phi)\right]<\infty.
\end{align*}
Again applying Lemma~\ref{lem-key-bound55}, we have
\begin{align*}
\limsup_r\E \left[ I\{\hat Z \ge -b\} \exp(-\theta \hat\Phi)\right]<\infty.
\end{align*}
Combining, we obtain $\limsup_r\E \left[\exp(-\theta \hat\Phi)\right]<\infty.$
From this, the required bound \eqn{eq-est3333}
for sufficiently small $\theta>0$,
follows by using the identity
$\sum_i \hat Z_i^-/\mu_i = -\hat\Phi+\sum_i \hat Z_i^+/\mu_i$,
property \eqn{eq-est3332}, and Holder inequality.
The proof is complete.

\section{Proof of Theorem~\ref{th-main-abandonments}}
\label{section:mainresult2}

Statement (i) of the theorem follows from the following result,
which is  a
stronger version of
Lemma~\ref{lemma:lowerbounds}.
\begin{lem}
\label{lem-bound-on-square}
Suppose $\nu_i\le \mu_i$ for all $i$.
There exists a sufficiently small $\theta>0$ such that, for each $i$,
\beql{eq-bound-on-square}
\limsup_{r\rightarrow\infty} \E\left[\exp(\theta (\hat Z_i^-)^2)\right]<\infty.
\end{equation}
\end{lem}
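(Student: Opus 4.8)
The plan is to strengthen the $M/M/\infty$ comparison that already underlies the proof of Lemma~\ref{lemma:lowerbounds}. Recall from that proof that, on a common probability space, the steady-state variables satisfy $\hat Z_i^- \le \hat G_i^-$ almost surely, where $\hat G_i = (G_i - \rho_i r)/\sqrt r$ and $G_i$ is Poisson with mean $\rho_i r$. Hence $\E[\exp(\theta(\hat Z_i^-)^2)] \le \E[\exp(\theta(\hat G_i^-)^2)]$, and it suffices to bound the right-hand side uniformly in $r$ for all sufficiently small $\theta>0$.

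The key step is an $r$-uniform sub-Gaussian bound on the lower deviations of $G_i$. By the standard Chernoff bound for the Poisson lower tail, for $t \in [0,\rho_i r]$ and $u = t/(\rho_i r)$,
\[
\pr(G_i \le \rho_i r - t) \le \exp\!\Big(-\rho_i r\,\big[(1-u)\ln(1-u) + u\big]\Big),
\]
and the elementary inequality $(1-u)\ln(1-u) + u \ge u^2/2$ on $[0,1)$ turns this into $\exp(-t^2/(2\rho_i r))$. Substituting $t = x\sqrt r$ (and using that the probability is $0$ when $x > \rho_i\sqrt r$, since $G_i \ge 0$), we obtain the $r$-free bound
\[
\pr(\hat G_i^- \ge x) \le \exp\!\Big(-\frac{x^2}{2\rho_i}\Big),\qquad x \ge 0 .
\]

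Given this tail estimate the conclusion follows by a routine integration. Writing $Y = \hat G_i^-$,
\begin{align*}
\E[\exp(\theta Y^2)] &= 1 + \int_0^\infty 2\theta y\, e^{\theta y^2}\,\pr(Y \ge y)\,dy \\
&\le 1 + \int_0^\infty 2\theta y\, e^{-(1/(2\rho_i) - \theta)\,y^2}\,dy = 1 + \frac{2\rho_i\theta}{1 - 2\rho_i\theta},
\end{align*}
which is finite and independent of $r$ as soon as $\theta < 1/(2\rho_i)$. Taking $\theta < 1/(2\max_i \rho_i)$ then handles all classes simultaneously and yields \eqn{eq-bound-on-square}.

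I do not expect a genuine obstacle in this argument; the only point that must be handled carefully is that the diffusion scaling converts the Poisson concentration rate $1/(2\rho_i r)$ into the $r$-independent rate $1/(2\rho_i)$, which is precisely what makes both the tail bound and the resulting moment integral uniform in $r$. Everything else (the coupling, the Chernoff bound, the integration) is standard or already available from the earlier part of the paper.
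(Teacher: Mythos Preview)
Your argument is correct. Both the paper and you reduce to the $M/M/\infty$ domination $\hat Z_i^-\le \hat G_i^-$ and then bound $\E\exp(\theta(\hat G_i^-)^2)$ uniformly in $r$; the difference is in how that Poisson bound is obtained. The paper proves a pointwise, local-limit-theorem style estimate on the pmf: using Stirling's formula for $h_m$ with $m=\lfloor p\rfloor$ and a direct computation of $\log(h_n/h_m)$, it shows $h_n\le C p^{-1/2}e^{-(n-p)^2/(2p)}$ for $n\le p$, and then sums. You instead go through the Chernoff (Cram\'er) bound for the lower tail, use the elementary inequality $(1-u)\ln(1-u)+u\ge u^2/2$ to get the clean sub-Gaussian tail $\pr(\hat G_i^-\ge x)\le e^{-x^2/(2\rho_i)}$, and finish with the layer-cake identity. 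Your route is shorter and avoids Stirling and the pmf-ratio calculation; the paper's route yields the slightly stronger pointwise density bound \eqn{eq-gaussian-bound}, which is not actually needed for the lemma. Both approaches give the same threshold $\theta<1/(2\rho_i)$ after scaling.
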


Given the fact that the vector $(\hat Z_i^-, ~i\in I)$ in Lemma~\ref{lem-bound-on-square}
is stochastically dominated by $(\hat G_i^-, ~i\in I)$,
where $\hat G_i = (G_i-\rho_i r)/\sqrt{r}$, and $G_i$'s are independent r.v.
with Poisson distribution
with mean $\rho_i r$, this lemma in turn is a corollary of the following asymptotic
property of Poission distributions.
\begin{lem}
\label{lem-bound-on-Gminus-square}
Consider a family of Poisson distributed random variables $H(p)$ with mean $p>0$.
Denote
$$
h_n=h_n(p)=\pr\{H(p)=n\}=e^{-p}p^n/n!, n\in\Z_+.
$$
Then, there exists a constant $C>0$,
such that for all sufficiently large $p$ and all $n\le p$,
\beql{eq-gaussian-bound}
h_n \le C \frac{1}{\sqrt{p}} e^{-(n-p)^2/(2p)}.
\end{equation}
Consequently, for any $0\le \theta < 1/2$,
\beql{eq-bound-on-Gminus-square}
\limsup_{p\rightarrow\infty}
\E e^{\theta[(H(p)-p)^-]^2/p} <\infty.
\end{equation}
\end{lem}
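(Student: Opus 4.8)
The plan is to prove the pointwise Gaussian upper bound \eqn{eq-gaussian-bound} on the Poisson mass function in the range $n\le p$ directly from Stirling's formula, and then to deduce \eqn{eq-bound-on-Gminus-square} by a routine summation. First I would write, for $n\ge 1$, Stirling's bound $n!\ge \sqrt{2\pi n}\,(n/e)^n$, so that
\begin{align*}
h_n = e^{-p}\frac{p^n}{n!} \le \frac{1}{\sqrt{2\pi n}}\exp\!\Big(-p+n\log p - n\log n + n\Big) = \frac{1}{\sqrt{2\pi n}}\exp\!\Big(-n\log\frac{n}{p} - (p-n)\Big).
\end{align*}
Setting $n=p-s$ with $0\le s\le p$, the exponent becomes $-(p-s)\log\!\big(1-\tfrac{s}{p}\big) + s$. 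Using the elementary inequality $-\log(1-x)\ge x + x^2/2$ for $x\in[0,1)$ (equivalently, analysing $\phi(s) = -(p-s)\log(1-s/p)+s$ and showing $\phi(s)\ge s^2/(2p)$ via its derivative, with the boundary value $n=0$ handled separately), one gets exponent $\le -s^2/(2p) = -(n-p)^2/(2p)$. Finally, the prefactor $1/\sqrt{2\pi n}$ must be replaced by $C/\sqrt{p}$; this is immediate when $n\ge p/2$, and when $n<p/2$ the exponential factor $e^{-(n-p)^2/(2p)}\le e^{-p/8}$ is so small that it dominates any polynomial prefactor for large $p$ (and the $n=0$ term is just $e^{-p}$). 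Combining these cases, and absorbing the $n=0$ boundary case, yields \eqn{eq-gaussian-bound} with a universal constant $C$, for all large $p$ and all $0\le n\le p$.

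For the second assertion \eqn{eq-bound-on-Gminus-square}, note that $[(H(p)-p)^-]^2$ is nonzero only when $H(p)<p$, i.e.\ for $n\le p$, where the bound \eqn{eq-gaussian-bound} applies. Writing $m=p-n$, I would estimate
\begin{align*}
\E e^{\theta[(H(p)-p)^-]^2/p} \le h_{\lceil p\rceil} + \sum_{1\le m\le p} h_{p-m}\, e^{\theta m^2/p} \le 1 + \frac{C}{\sqrt{p}}\sum_{1\le m\le p} e^{-(1/2-\theta)m^2/p}.
\end{align*}
For $0\le\theta<1/2$ the constant $\alpha := 1/2-\theta>0$, and the sum $\sum_{m\ge 1} e^{-\alpha m^2/p}$ is comparable to the Gaussian integral $\int_0^\infty e^{-\alpha t^2/p}\,dt = \tfrac12\sqrt{\pi p/\alpha}$, so $\frac{1}{\sqrt p}\sum_{m\ge1} e^{-\alpha m^2/p}$ is bounded uniformly in $p$. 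Hence $\limsup_{p\to\infty}\E e^{\theta[(H(p)-p)^-]^2/p}<\infty$, which is \eqn{eq-bound-on-Gminus-square}.

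The main obstacle is the first part: making the Stirling estimate fully rigorous and uniform over the entire range $0\le n\le p$ — in particular controlling the $1/\sqrt{n}$ prefactor near $n=0$ and verifying the clean quadratic lower bound on the exponent $\phi(s)=-(p-s)\log(1-s/p)+s$ for \emph{all} $s\in[0,p)$ rather than just for small $s/p$. Once \eqn{eq-gaussian-bound} is in hand, the passage to \eqn{eq-bound-on-Gminus-square}, and then via stochastic domination (the coupling $G_i\le Z_i$ from the proof of Lemma~\ref{lemma:lowerbounds}) to Lemma~\ref{lem-bound-on-square} and statement (i) of Theorem~\ref{th-main-abandonments}, is routine.
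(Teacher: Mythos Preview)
Your overall strategy is sound and lands in the same place as the paper's proof, but there are two slips in the first part.

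First, a sign error: with $n=p-s$ the exponent $-n\log(n/p)-(p-n)$ becomes $-(p-s)\log(1-s/p)-s$, not $+s$. The function you should lower-bound by $s^2/(2p)$ is therefore $\phi(s)=s+(p-s)\log(1-s/p)$, the negative of what you wrote. With this correction your derivative idea works cleanly: $\phi'(s)=-\log(1-s/p)\ge s/p$ and integrating gives $\phi(s)\ge s^2/(2p)$. Note that applying $-\log(1-x)\ge x+x^2/2$ \emph{directly} to the exponent (rather than to $\phi'$) yields a lower bound on the exponent, which is the wrong direction.

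Second, the prefactor step for $n<p/2$ does not go through as stated. You have $h_n\le\frac{1}{\sqrt{2\pi n}}\,e^{-(n-p)^2/(2p)}$ and want $h_n\le\frac{C}{\sqrt{p}}\,e^{-(n-p)^2/(2p)}$; this requires $\sqrt{p/n}\le C$, which fails for small $n$, and once you have replaced the true exponent by $-(n-p)^2/(2p)$ there is no residual exponential factor to absorb the polynomial. The fix is to note that the \emph{actual} Stirling exponent $n-p+n\log(p/n)$ is strictly below $-(p-n)^2/(2p)$ by an amount growing linearly in $p$ whenever $n/p$ is bounded away from $1$; that slack does kill the $\sqrt{p/n}$ factor, but you need to say so.

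The paper sidesteps both issues by a slightly different device: it applies Stirling only at $m=\lfloor p\rfloor$ to get $h_m\le C_1/\sqrt{p}$, and then bounds the ratio
\[
\log(h_n/h_m)=\sum_{j=n+1}^m\log\Bigl(1-\frac{p-j}{p}\Bigr)\le -\sum_{j=n+1}^m\frac{p-j}{p}=-\frac{(m-n)(2p-m-n-1)}{2p},
\]
which after substituting $m=p-\epsilon$, $\epsilon\in[0,1)$, gives $h_n/h_m\le C_2\,e^{-(p-n)^2/(2p)}$ directly. This telescoping-product route never introduces the $1/\sqrt{n}$ prefactor and uses only the trivially-oriented inequality $\log(1-x)\le -x$. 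Your derivation of \eqn{eq-bound-on-Gminus-square} from \eqn{eq-gaussian-bound} via the Riemann-sum comparison to $\int e^{-(1/2-\theta)\xi^2}d\xi$ is correct and matches the paper.
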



\begin{proof}
Denote $m=\lfloor p \rfloor$. Using Stirling formula for the factorial
in $h_m=e^{-p}p^m/m!$, we directly verify that
$$ 
\lim_{p\to\infty}
h_m \left[\frac{1}{\sqrt{2\pi}} \frac{1}{\sqrt{p}}\right]^{-1} = 1,
$$ 
in particular, for some constant $C_1>0$ and all large $p$
\beql{eq-hm2}
h_m \le C_1 \frac{1}{\sqrt{p}}.
\end{equation}
Therefore, to show \eqn{eq-gaussian-bound}, it will suffice
to show the existence of $C_2>0$ such that for all large $p$ and any
$n\le m$, we have
\beql{eq-hn-ratio}
h_n/h_m \le C_2 e^{-(n-p)^2/(2p)}.
\end{equation}
We only need to verify \eqn{eq-hn-ratio} for $n<m$, because
$(m-p)^2/(2p)\to 0$ as $p\to\infty$, so that the case $n=m$ can be covered
by rechoosing (increasing) $C_2$ if necessary.
We have
$$
\log (h_n/h_m) = \log \prod_{j=n+1}^m \frac{j}{p}
= \sum_{j=n+1}^m \log (1-\frac{p-j}{p})
\le - \sum_{j=n+1}^m \frac{p-j}{p}
= -\frac{1}{2p}(m-n)(p-m +p-n-1).
$$
Note that $\epsilon=p-m \in [0,1)$. Substituting $m=p-\epsilon$
in the RHS of the above inequality, we see that it is
$$
-\frac{1}{2p}(p-n)^2 + \frac{1}{2p}(p-n+\epsilon(1-\epsilon))
\le -\frac{1}{2p}(p-n)^2 + 1
$$
for all $p\ge 1$. We proved \eqn{eq-gaussian-bound}.

To show \eqn{eq-bound-on-Gminus-square}, we can write
$$
\E e^{\theta[(H(p)-p)^-]^2/p} \le 1 + \sum_{n<p} h_n e^{\theta(n-p)^2/p} \le
$$
$$
1 + \sum_{n<p-1} C \frac{1}{\sqrt{p}}  e^{(\theta-1/2)(n-p)^2/p}
+ C \frac{1}{\sqrt{p}}.
$$
It remains to observe that
$$
\sum_{n<p-1} \frac{1}{\sqrt{p}}  e^{(\theta-1/2)(n-p)^2/p}
\le \int_{-\infty}^0 e^{-(1/2-\theta)\xi^2} d\xi.
$$
\end{proof}


Thus, statement (i) of the theorem is proved.
In view of Lemma~\ref{lem-monotonicity}, to prove statement (ii),
it suffices to consider the case $\nu_i\le \mu_i$ for all $i$.
Then the following proposition will imply
statement (ii) almost immediately.

\begin{prop}
\label{prop:abandon}
Suppose, $0<\nu_i\le \mu_i$ for all $i$.
There exists $\bar\theta>0$ and $c_0>0$ such that for all $\theta\in [0,\bar\theta]$ and $k$
\beql{eq-abandon}
\limsup_{r\rightarrow\infty} \E\left[I\lbrace\hat\Phi\le k\rbrace\exp(\theta \hat\Phi^2)\right]\le c_0,
\end{equation}
\beql{eq-abandon2}
\limsup_{r\rightarrow\infty} \E\left[I\lbrace\hat\Phi\le k\rbrace\exp(\theta (\sum_i \hat Z_i^+/\mu_i)^2)\right]\le c_0.
\end{equation}
\end{prop}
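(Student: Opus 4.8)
Proof proposal.

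The plan is to run the exponential-Lyapunov argument of Sections~\ref{section:mainresult1}--\ref{section:mainresult13333} one level higher, with the candidate Lyapunov function $V(\mb x)=\exp(\theta\,\hat\Phi(\mb x)^2)$. The key point is that, when all $\nu_i>0$, the drift identity from Section~\ref{section:mainresult13333}, $\bar A\hat\Phi(\mb x)=-\hat Z_a(\mb x)-\sum_i\nu_i\hat Q_i(\mb x)$, contains the term $-\sum_i\nu_i\hat Q_i$, which is \emph{proportional to $\hat\Phi$} once $\hat\Phi$ is large, so that the drift of $V$ behaves like $-c\,\theta\,\hat\Phi^2\,V$ -- exactly what forces a Gaussian tail. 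I would work with the truncation $V_k(\mb x)=\exp(\theta\,\hat\Phi_{(k)}(\mb x)^2)$, $\hat\Phi_{(k)}=\min(\hat\Phi,k)$; since $\{\hat\Phi<k\}$ is finite, $V_k$ is constant off a finite set, so $\E\bar A V_k=0$. Because $\min(t,k)^2\le t^2$ for $k\ge0$, one has $\bar A V_k(\mb x)\le\bar A V(\mb x)$ whenever $\hat\Phi(\mb x)<k$, and $\bar A V_k(\mb x)\le0$ whenever $\hat\Phi(\mb x)\ge k$, provided $r$ is large enough that every neighbour $\mb y$ of such an $\mb x$ satisfies $\hat\Phi(\mb y)\ge -k$. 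Hence
\begin{align*}
0\ \le\ \E\big[\,I\{\hat\Phi<k\}\,\bar A V\,\big],
\end{align*}
a finite sum, which is the inequality I would exploit.

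Next I would Taylor-expand $\exp$ as in \eqn{eq-long}. Writing $\bar A(\hat\Phi^2)(\mb x)=2\hat\Phi(\mb x)\bar A\hat\Phi(\mb x)+\mathrm{QV}(\mb x)$ with $0\le\mathrm{QV}(\mb x)\le R(\mb x)/(\mu_{\min}^2 r)$, where $R(\mb x)$ is the total outgoing rate (which is $O(r)+O(\sqrt r\,\hat Z^+)$), one gets
\begin{align*}
\bar A V(\mb x)\ \le\ e^{\theta\hat\Phi(\mb x)^2}\Big[\theta\,\bar A(\hat\Phi^2)(\mb x)+\tfrac12\theta^2R(\mb x)\,\beta(\mb x)^2\,e^{\theta\beta(\mb x)}\Big],\qquad \beta(\mb x)=\tfrac{2|\hat\Phi(\mb x)|}{\mu_{\min}\sqrt r}+\tfrac1{\mu_{\min}^2 r}.
\end{align*}
Two elementary facts do the work. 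First, $\hat\Phi=\sum_i\hat Z_i/\mu_i$ forces $\hat Z\ge\mu_{\min}\hat\Phi-\sum_i\hat Z_i^-$, so on the ``good'' event $\{\hat\Phi\ge k_0,\ \sum_i\hat Z_i^-\le\delta\hat\Phi\}$, with $\delta<\mu_{\min}/2$ and $k_0$ large, one has $\hat Z_a=a$, $\hat Z-a\ge\tfrac{\mu_{\min}}{4}\hat\Phi$, hence $2\hat\Phi\bar A\hat\Phi\ge\tfrac{\nu_{\min}\mu_{\min}}{2}\hat\Phi^2$, and also $\hat Z^+\le2\mu_{\max}\hat\Phi$; thus the $\mathrm{QV}$ and $\theta^2$ error terms, after the AM--GM split $\beta|\hat\Phi|\le\epsilon\hat\Phi^2+\epsilon^{-1}/r$ and the bound $e^{\theta\beta}e^{\theta\hat\Phi^2}\le C\,e^{\theta(1+1/r)\hat\Phi^2}\le C\,e^{\theta k^2/r}e^{\theta\hat\Phi^2}$ valid on $\{\hat\Phi<k\}$, can be absorbed into a fraction of the quadratic term (taking $\theta$ small and $k_0$ large), up to a remainder $o_r(1)$ for each fixed $k$; so $-\bar A V(\mb x)\ge c_*\theta\,\hat\Phi^2e^{\theta\hat\Phi^2}-o_r(1)$ there. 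Second, on the complementary event $\{\hat\Phi\ge k_0,\ \sum_i\hat Z_i^->\delta\hat\Phi\}$ we have $\hat\Phi^2\le\delta^{-2}(\sum_i\hat Z_i^-)^2$, so $e^{\theta\hat\Phi^2}\le e^{(\theta/\delta^2)(\sum_i\hat Z_i^-)^2}$ and every (polynomially growing) coefficient in $\bar A V$ can be crudely bounded by a polynomial in $\sum_i\hat Z_i^-$; choosing $\theta/\delta^2$ small enough and using $(\sum_i\hat Z_i^-)^2\le|I|\sum_i(\hat Z_i^-)^2$ together with Lemma~\ref{lem-bound-on-square} and H\"older, the expectation of $|\bar A V|$ there is bounded uniformly in $r$ and in $k$. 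Finally, the contribution of $\{\hat\Phi<k_0\}$ to $\E[I\{\hat\Phi<k\}\bar A V]$ is bounded uniformly in $r$ (large) and in $k$ by some $C(k_0,\theta)$, since there $|\hat\Phi|<k_0$, $|\bar A(\hat\Phi^2)|\le C(k_0)(1+\sum_i\hat Z_i^-)$, and $\E\sum_i\hat Z_i^-<\infty$ by Lemma~\ref{lemma:lowerbounds}.

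Assembling these into $0\le\E[I\{\hat\Phi<k\}\bar A V]$ yields, for $r$ large (depending on $k$), $c_*\theta\,\E[I\{\text{good},\ \hat\Phi<k\}\,\hat\Phi^2e^{\theta\hat\Phi^2}]\le C(k_0,\theta)+o_r(1)$, and $\limsup_r$ gives a bound $C_3$ on that expectation that is \emph{independent of $k$}. Then, splitting $\{\hat\Phi\le k\}$ into $\{\hat\Phi<k_0\}$, the good event, and the bad event, using $e^{\theta\hat\Phi^2}\le k_0^{-2}\hat\Phi^2e^{\theta\hat\Phi^2}$ on the good event, and applying Lemma~\ref{lem-bound-on-square} on $\{\hat\Phi<-k_0\}$ (where $\hat\Phi^2\le(\sum_i\hat Z_i^-/\mu_{\min})^2$) and on the bad event, one obtains $\limsup_r\E[I\{\hat\Phi\le k\}e^{\theta\hat\Phi^2}]\le c_0$ with $c_0$ free of $k$; this is \eqn{eq-abandon}, with $\bar\theta$ explicit in terms of $c_*$, $\delta$, and the threshold in Lemma~\ref{lem-bound-on-square}. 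Inequality \eqn{eq-abandon2} then follows: $\sum_i\hat Z_i^+/\mu_i=\hat\Phi+\sum_i\hat Z_i^-/\mu_i$ gives $(\sum_i\hat Z_i^+/\mu_i)^2\le2\hat\Phi^2+2(\sum_i\hat Z_i^-/\mu_i)^2$, hence $\exp(\theta(\sum_i\hat Z_i^+/\mu_i)^2)\le\exp(2\theta\hat\Phi^2)\exp(2\theta(\sum_i\hat Z_i^-/\mu_i)^2)$, and Cauchy--Schwarz with \eqn{eq-abandon} (at $4\theta$) and Lemma~\ref{lem-bound-on-square} (at $4\theta/\mu_{\min}^2$, after the $(\sum\cdot)^2\le|I|\sum(\cdot)^2$/H\"older reduction) closes it, at the cost of shrinking $\bar\theta$.

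The step I expect to be the main obstacle is the drift estimate: making precise that $\hat\Phi$ large really does force $\sum_i\nu_i\hat Q_i=\Omega(\hat\Phi)$ -- which is only true off the rare event $\{\sum_i\hat Z_i^-\ \text{large}\}$ -- and setting up the partition so this rare event is absorbed entirely by the already-available sub-Gaussian control of $\hat Z_i^-$, while at the same time preventing the second-order Taylor term and the truncation from re-introducing a factor $e^{\theta'\hat\Phi^2}$ with $\theta'>\theta$; freezing $k$ and letting $r\to\infty$ (so $e^{\theta k^2/r}\to1$) is what defuses the latter, and obtaining a final $c_0$ independent of $k$ requires checking that every ``bounded'' term above is bounded by a quantity free of $k$.
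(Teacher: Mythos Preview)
Your overall strategy is the same as the paper's: truncated Lyapunov function $\exp(\theta\hat\Phi_{(k)}^2)$, extract the quadratic drift $-c\theta\hat\Phi^2 e^{\theta\hat\Phi^2}$ from the abandonment term, and absorb everything else using the sub-Gaussian control of $\sum_i\hat Z_i^-$ from Lemma~\ref{lem-bound-on-square}. Your good/bad split $\{\sum_i\hat Z_i^-\le\delta\hat\Phi\}$ versus its complement is exactly the threshold-$C$ trick the paper uses at the end (their split $\{C\sum_i\hat Z_i^-\le\hat\Phi\}$ versus $\{\hat\Phi<C\sum_i\hat Z_i^-\}$); you simply introduce it earlier. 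The derivation of \eqn{eq-abandon2} from \eqn{eq-abandon} via Cauchy--Schwarz and Lemma~\ref{lem-bound-on-square} is also identical.

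There is one genuine slip. When you treat the region $\{\hat\Phi<k_0\}$ you write ``since there $|\hat\Phi|<k_0$'', but this set contains all states with $\hat\Phi$ arbitrarily negative, where $e^{\theta\hat\Phi^2}$ is unbounded; invoking only Lemma~\ref{lemma:lowerbounds} (exponential moments of $\sum_i\hat Z_i^-$) is not enough to control $\E[I\{\hat\Phi<0\}\,|\bar A V|]$. The paper handles this by keeping the factor $e^{\theta\hat\Phi^-}$ explicit in the second-order term (their bound $2|\Delta\hat\Phi_{(k)}|(|\hat\Phi_{(k)}(\mb y)|+|\hat\Phi_{(k)}(\mb x)|)\le\hat\Phi^-(\mb x)+1$), and then treating the case $\hat\Phi<0$ separately in each term, using that $\hat\Phi^-\le\mu_{\min}^{-1}\sum_i\hat Z_i^-$ and $\sum_i\hat Z_i^+\le(\mu_{\max}/\mu_{\min})\sum_i\hat Z_i^-$ there, so that every factor---including $e^{\theta(\hat\Phi^-)^2}$---is dominated by a polynomial times $e^{c\theta(\sum_i\hat Z_i^-)^2}$, which \emph{is} controlled by Lemma~\ref{lem-bound-on-square} for $\theta$ small. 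Your argument goes through once you make the same repair: split $\{\hat\Phi<k_0\}$ into $\{|\hat\Phi|<k_0\}$ (where your claim is valid) and $\{\hat\Phi<-k_0\}$, and on the latter bound $|\bar A V|$ by a polynomial in $\sum_i\hat Z_i^-$ times $e^{c\theta(\sum_i\hat Z_i^-)^2}$ and invoke Lemma~\ref{lem-bound-on-square}, not Lemma~\ref{lemma:lowerbounds}. A smaller point: the paper avoids tracking $R(\mb x)=O(r)+O(\sqrt r\,\hat Z^+)$ by observing that $\hat\Phi_{(k)}$ can change only when $\hat\Phi\le k+\mu_{\min}^{-1}/\sqrt r$, which caps $Z$ and hence the total rate by $c_4 r$; this streamlines the second-order estimate considerably.
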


\begin{proof}[Proof of Proposition~\ref{prop:abandon}]
Property \eqn{eq-abandon2} follows from Lemma~\ref{lem-bound-on-square},
\eqn{eq-abandon}  and application of Cauchy-Schwartz inequality because
$$
(\sum_i \hat Z_i^+/\mu_i)^2 = (\hat\Phi +\sum_i \hat Z_i^-/\mu_i)^2
\le 2 \hat\Phi^2 + 2|I|\sum_i (\hat Z_i^-/\mu_i)^2.
$$
Thus, the main bulk of the proof is devoted to \eqn{eq-abandon}.

We first write the expression and derive bounds for $\bar{A} \hat\Phi$.
We have (using notation $\Psi=\sum_i \Psi_i$) for every state $\mb{x}$
\begin{align}\label{eq:drift-abandonments}
\bar{A} \hat\Phi (\mb{x}) &= \frac{1}{\sqrt{r}}[\sum_i (\lambda_i r)/\mu_i - \sum_i \Psi_i(\mb{x})-\sum_i(Z_i(\mb{x})-\Psi_i(\mb{x}))\nu_i/\mu_i]\\
&\le \frac{1}{\sqrt{r}}[r - \Psi(\mb{x})-\nu_{\min}/\mu_{\max}(Z(\mb{x})-\Psi(\mb{x}))]
\notag\\
&=-\min\lbrace\hat Z(\mb{x}),a\rbrace-\nu_{\min}/\mu_{\max}(\hat Z(\mb{x})-a)^+.\notag
\end{align}
Now we have
\begin{align*}
&\hat Z(\mb{x})=\sum_i \hat Z_i(\mb{x})=\sum_i \hat Z_i^+(\mb{x}) -\sum_i \hat Z_i^-(\mb{x})
\ge \mu_{\min} \hat\Phi^+(\mb{x}) - \sum_i \hat Z_i^-(\mb{x}) .
\end{align*}
Since $(x-a)^+$ is a monotone function, we obtain a bound
\begin{align}
\bar{A} \hat\Phi (\mb{x})
&\le -\min\lbrace\hat Z(\mb{x}),a\rbrace-(\nu_{\min}\mu_{\min}/\mu_{\max})\hat\Phi^+(\mb{x})
+(\nu_{\min}/\mu_{\max})\sum_i\hat Z_i^-(\mb{x})+(\nu_{\min}/\mu_{\max})a \notag\\
&\triangleq -\min\lbrace\hat Z(\mb{x}),a\rbrace-c_1\hat\Phi^+ +c_2\sum_i\hat Z_i^-(\mb{x})+c_3. \label{eq:BoundGenerator}
\end{align}

Similarly to \eqn{eq:BoundGenerator} we can also obtain a lower bound
on $\bar{A} \hat\Phi$. Namely, from \eqn{eq:drift-abandonments} we get
$$
\bar{A} \hat\Phi (\mb{x}) \ge 
-\min\lbrace\hat Z(\mb{x}),a\rbrace-\nu_{\max}/\mu_{\min}(\hat Z(\mb{x})-a)^+,
$$
which, using 
$\hat Z(\mb{x})\le \sum_i \hat Z_i^+(\mb{x})$, implies
\beql{eq:BoundGeneratorLower}
\bar{A} \hat\Phi (\mb{x}) \ge 
-\min\lbrace\hat Z(\mb{x}),a\rbrace-c_{25}\sum_i\hat Z_i^+(\mb{x}),
\end{equation}
where $c_{25}=\nu_{\max}/\mu_{\min}$.

Consider a (candidate) Lyapunov function $\exp(\theta \hat\Phi^2(\mb{x}))$ with $0<\theta\le \bar\theta$
where $\bar\theta$ will be implicitly defined below.
(Below we use the phrase ``for sufficiently small $\theta>0$'' to mean ``for all
$\theta \in (0,\bar\theta]$, with $\bar\theta$ rechosen to be smaller, if necessary.'')
Without the loss of generality we assume $\bar\theta\le 1$.
We have for $h(x)=\exp(\theta x^2)$ that $dh/dx=2\theta x\exp(\theta x^2)$ and $d^2 h/dx^2=2\theta \exp(\theta x^2)+4\theta^2x^2\exp(\theta x^2)$.
We use Taylor expansion
\begin{align*}
\exp(\theta y^2)=\exp(\theta x^2)\left(1+2\theta x(y-x)+\frac{1}{2}(2\theta+4\theta^2 z^2)(y-x)^2\exp(\theta z^2-\theta x^2)\right)
\end{align*}
for some $z$ satisfying $|z-x|\le |y-x|$.
We have
$z^2-x^2=(z-x)(z+x)\le 2|y-x|(|y|+|x|)$.
Then
\begin{align*}
\exp(\theta y^2)&\le \exp(\theta x^2)\left(1+2\theta x(y-x)
+(\theta+2\theta^2(|x|+|y-x|)^2(y-x)^2)\exp(\theta 2|y-x|(|y|+|x|))\right) \\
\end{align*}
Let $\hat\Phi_{(k)}=\min(\hat\Phi,k)$ and let us apply the above
inequalities for $x=\hat\Phi_{(k)}(\mb{x})$ and $y=\hat\Phi_{(k)}({\bf y})$,
where ${\bf y}\in T(\mb{y})$.
We have $|\hat\Phi({\bf y})-\hat\Phi(\mb{x})|\le \mu_{\min}^{-1}/\sqrt{r}$ implying the same for $\hat\Phi_{(k)}$.
Then
\begin{align*}
\max\{\hat\Phi_{(k)}({\bf y}),\hat\Phi_{(k)}(\mb{x})\} \le \mu_{\min}^{-1}/\sqrt{r}+k,
\end{align*}
and
\begin{align*}
\min\{\hat\Phi_{(k)}({\bf y}),\hat\Phi_{(k)}(\mb{x})\}\ge -\hat\Phi^-(\mb{x})-\mu_{\min}^{-1}/\sqrt{r}.
\end{align*}
This implies
\begin{align*}
2|\hat\Phi_{(k)}({\bf y})-\hat\Phi_{(k)}(\mb{x})| (|\hat\Phi_{(k)}({\bf y})|+|\hat\Phi_{(k)}(\mb{x})|)
\le 4[\mu_{\min}^{-2}/r+\mu_{\min}^{-1}k/\sqrt{r}+\mu_{\min}^{-1}\hat\Phi^-(\mb{x})/\sqrt{r}].
\end{align*}
For sufficiently large $r$ which depends on $k$ and model parameters,
we obtain even a cruder upper bound
\begin{align*}
2|\hat\Phi_{(k)}({\bf y})-\hat\Phi_{(k)}(\mb{x})| (|\hat\Phi_{(k)}({\bf y})|+|\hat\Phi_{(k)}(\mb{x})|)
\le \hat\Phi^-(\mb{x}) + 1
\end{align*}
giving
\begin{align*}
\exp(\theta 2|\hat\Phi_{(k)}({\bf y})-\hat\Phi_{(k)}(\mb{x})| (|\hat\Phi_{(k)}({\bf y})|+|\hat\Phi_{(k)}(\mb{x})|))
\le \exp(\theta\hat\Phi^-(\mb{x})+\theta).
\end{align*}
Note that, given $k$, the value of $\hat\Phi_{(k)}(\mb{x})$ can change
after a direct transition
only when $\sum_i \hat Z_i(\mb{x})/\mu_i\le k + 1/\mu_{min}$, implying
$Z\le r+\mu_{\max}(k+1/\mu_{min})\sqrt{r}$. Therefore,
 uniformly on all states $\mb{x}$,
the total rate of all transitions that can possibly change the
value of $\hat\Phi_{(k)}(\mb{x})$ is upper bounded by
\begin{align*}
r\sum_i\lambda_i+(r+a\sqrt{r})\mu_{\max}+\sqrt{r}\mu_{\max}(k+1/\mu_{min})\sum_i\nu_i\le c_4 r,
\end{align*}
for the appropriate choice of positive constant $c_4$.
Also,
\begin{align*}
(\hat\Phi_{(k)}({\bf y})-\hat\Phi_{(k)}(\mb{x}))^2\le 1/(\mu_{\min}^2 r).
\end{align*}
Therefore, the product of the rate of transitions (which can
change $\hat\Phi_{(k)}$) and the value of $(\hat\Phi_{(k)}({\bf y})-\hat\Phi_{(k)}(\mb{x}))^2$
is upper bounded by $c_4/\mu_{\min}^2$.
Assembling all the above estimates, we obtain
\begin{align*}
A \exp(\theta \hat\Phi_{(k)}^2(\mb{x}))
& \le 2\theta\hat\Phi_{(k)}(\mb{x})
[A \hat\Phi_{(k)} (\mb{x})]
\exp(\theta \hat\Phi_{(k)}^2(\mb{x})) \\
& +\theta c_4\mu_{\min}^{-2}
\exp(\theta \hat\Phi^-(\mb{x})+\theta)
\exp(\theta \hat\Phi_{(k)}^2(\mb{x}))\\
& +2\theta^2
(|\hat\Phi_{(k)}(\mb{x})| + \mu_{\min}^{-1}r^{-1/2})^2
\exp(\theta \hat\Phi^-(\mb{x})+\theta)
\exp(\theta \hat\Phi_{(k)}^2(\mb{x}))\\
&\le  2\theta\hat\Phi_{(k)}(\mb{x})
[A \hat\Phi_{(k)} (\mb{x})]
\exp(\theta \hat\Phi_{(k)}^2(\mb{x})) \\
& +\theta c_5
\exp(\theta \hat\Phi^-(\mb{x}))
\exp(\theta \hat\Phi_{(k)}^2(\mb{x}))\\
& +c_6\theta^2
(|\hat\Phi_{(k)}(\mb{x})| + \mu_{\min}^{-1}r^{-1/2})^2
\exp(\theta \hat\Phi^-(\mb{x}))
\exp(\theta \hat\Phi_{(k)}^2(\mb{x}))
\end{align*}
where we have incorporated $e^\theta$ into constants $c_5$ and $c_6$. This is possible by our assumption $\bar\theta\le 1$.
Furthermore, for large enough $r$ we have have $ \mu_{\min}^{-1}r^{-1/2}<1$.
Thus $(|\hat\Phi_{(k)}(\mb{x})| + \mu_{\min}^{-1}r^{-1/2})^2\le 2\hat\Phi_{(k)}^2(\mb{x})+2$,
and for the appropriate choice of constants $c_7$ and $c_8$, we obtain
\begin{align*}
A \exp(\theta \hat\Phi_{(k)}^2(\mb{x}))
& \le 2\theta\hat\Phi_{(k)}(\mb{x})
[A \hat\Phi_{(k)} (\mb{x})]
\exp(\theta \hat\Phi_{(k)}^2(\mb{x})) \\
& +\theta c_7
\exp(\theta \hat\Phi^-(\mb{x}))
\exp(\theta \hat\Phi_{(k)}^2(\mb{x}))\\
& +c_8\theta^2
\hat\Phi_{(k)}^2(\mb{x})
\exp(\theta \hat\Phi^-(\mb{x}))
\exp(\theta \hat\Phi_{(k)}^2(\mb{x}))
\end{align*}


Now we will use appropriate bounds on $A \hat\Phi_{(k)} (\mb{x})$.
Observe that if the state $\mb{x}$ is such that $\hat\Phi(\mb{x})>k$ then the left hand side is at most zero.
If  $\mb{x}$ is such that $0\le \hat\Phi(\mb{x})\le k$,
then we have $A \hat\Phi_{(k)} (\mb{x})\le \bar A \hat\Phi (\mb{x})$
and can use
upper bound (\ref{eq:BoundGenerator}) on $\bar A \hat\Phi (\mb{x})$. 
Also, for all large enough $r$,
$\hat\Phi(\mb{x})<0$ implies
$A \hat\Phi_{(k)} (\mb{x}) = \bar A \hat\Phi (\mb{x})$
and we can use lower bound (\ref{eq:BoundGeneratorLower}) 
on $\bar A \hat\Phi (\mb{x})$.

Therefore, we can write
\begin{align*}
A \exp(\theta \hat\Phi_{(k)}^2(\mb{x}))
&\le -2\theta \hat\Phi(\mb{x}) I\lbrace\hat\Phi(\mb{x})\le k\rbrace\exp(\theta \hat\Phi^2(\mb{x}))\min\lbrace\hat Z(\mb{x}),a\rbrace\\
&-2c_1\theta \hat\Phi(\mb{x})\hat\Phi^+(\mb{x}) I\lbrace 0 \le \hat\Phi(\mb{x})\le k\rbrace\exp(\theta \hat\Phi^2(\mb{x}))\\
&+2c_2\theta \hat\Phi(\mb{x}) I\lbrace 0 \le \hat\Phi(\mb{x})\le k\rbrace\exp(\theta \hat\Phi^2(\mb{x}))\sum_i\hat Z_i^-\\
&+2c_3\theta \hat\Phi(\mb{x}) I\lbrace 0 \le \hat\Phi(\mb{x})\le k\rbrace\exp(\theta \hat\Phi^2(\mb{x})) \\
&-2c_{25}\theta \hat\Phi(\mb{x}) I\lbrace \hat\Phi(\mb{x}) < 0 \rbrace\exp(\theta \hat\Phi^2(\mb{x}))\sum_i\hat Z_i^+\\
&+c_7\theta I\lbrace\hat\Phi(\mb{x})\le k\rbrace\exp(\theta \hat\Phi^-(\mb{x}))\exp(\theta \hat\Phi^2(\mb{x}))\\
&+c_8\theta^2 I\lbrace\hat\Phi(\mb{x})\le k\rbrace\hat\Phi^2(\mb{x})\exp(\theta \hat\Phi^-(\mb{x}))
\exp(\theta \hat\Phi^2(\mb{x})),\\
\end{align*}
where we switched from $\hat\Phi_{(k)}(\mb{x})$ to $\hat\Phi(\mb{x})$ since 
they are equal when  $\hat\Phi\le k$.
We take the expectation of both sides with respect to the
stationary distribution of the process. Since the expectation
of the LHS is zero, and we can get rid of positive factor $\theta$,
we obtain
\begin{align}
2 \E\left[\hat\Phi I\lbrace\hat\Phi\le k\rbrace\exp(\theta \hat\Phi^2)\min\lbrace\hat Z,a\rbrace\right]
&+2c_1 \E\left[\hat\Phi\hat\Phi^+ I\lbrace 0 \le \hat\Phi\le k\rbrace\exp(\theta \hat\Phi^2)\right]\notag\\
&\le 2c_2 \E\left[\hat\Phi^+ I\lbrace 0 \le \hat\Phi\le k\rbrace\exp(\theta (\hat\Phi^+)^2)\sum_i\hat Z_i^-\right] \label{eq:MainInequality}\\
&+2c_3 \E\left[\hat\Phi^+ I\lbrace 0 \le \hat\Phi\le k\rbrace\exp(\theta (\hat\Phi^+)^2)\right] \notag\\
&+ 2c_{25} \E\left[\hat\Phi^- I\lbrace \hat\Phi < 0 \rbrace\exp(\theta (\hat\Phi^+)^2)\sum_i\hat Z_i^+\right] \notag\\
&+c_7\theta \E\left[I\lbrace\hat\Phi\le k\rbrace\exp(\theta \hat\Phi^-)\exp(\theta \hat\Phi^2)\right]\notag\\
&+c_8\theta \E\left[I\lbrace\hat\Phi\le k\rbrace\hat\Phi^2\exp(\theta \hat\Phi^-)\exp(\theta \hat\Phi^2)\right],\notag
\end{align}
where in the first two terms after $\le$, we replaced $\hat\Phi$ by $\hat\Phi^+$ 
because they are equal when $\hat\Phi \ge 0$;
similarly, in the third term after $\le$, we replaced
$-\hat\Phi$ and $\hat\Phi^2$ by $\hat\Phi^-$ and $(\hat\Phi^-)^2$,
respectively,
using condition $\hat\Phi < 0$.

Let us analyze term by term and obtain some bounds that are uniform in $r$ and $k$.
First consider
\begin{align*}
2 \E\left[\hat\Phi I\lbrace\hat\Phi\le k\rbrace\exp(\theta \hat\Phi^2)\min\lbrace\hat Z,a\rbrace\right]
\end{align*}
When $\hat\Phi$ and $\hat Z$ have the same sign, the term is non-negative and we can drop it from the inequality.
When $\hat\Phi\le 0$ and $\hat Z \ge 0$ we have
\begin{align*}
|2 \hat\Phi I\lbrace\hat\Phi\le k\rbrace\exp(\theta \hat\Phi^2)\min\lbrace\hat Z,a\rbrace|\le
2 a \hat\Phi^-\exp(\theta \hat\Phi^2)\le 2 a \mu_{\max}(\sum_i \hat Z_i^-)
\exp(\theta \mu_{min}^{-2}(\sum_i \hat Z_i^-)^2).
\end{align*}

For sufficiently small $\theta>0$,
applying Lemma~\ref{lem-bound-on-square}, we have
\begin{align*}
\limsup_{r\rightarrow\infty}2
\E\left[I\lbrace\hat\Phi\le 0,~\hat Z \ge 0 \rbrace
\Big|\hat\Phi I\lbrace\hat\Phi\le k\rbrace\exp(\theta \hat\Phi^2)\min\lbrace\hat Z,a\rbrace\Big|\right]
\le c_9,
\end{align*}
for some $0<c_9<\infty$ (independent of $k$).
Finally, when $\hat\Phi \ge 0,\hat Z\le 0$ we have
\begin{align*}
2 |\hat\Phi I\lbrace\hat\Phi\le k\rbrace\exp(\theta \hat\Phi^2)\min\lbrace\hat Z,a\rbrace|\le
2 \hat\Phi^+ I\lbrace\hat\Phi\le k\rbrace\exp(\theta (\hat\Phi^+)^2)\sum_i\hat Z_i^-.
\end{align*}
Observe that we already have the expectation of
such a term on the right hand side of our inequality (\ref{eq:MainInequality}), up to a constant factor.

For the second term in the inequality (\ref{eq:MainInequality}), we have
\begin{align}
\label{eq:term2}
2c_1 \E\left[\hat\Phi\hat\Phi^+  I\lbrace\hat\Phi\le k\rbrace\exp(\theta \hat\Phi^2)\right]=
2c_1 \E\left[(\Phi^+)^2  I\lbrace\hat\Phi\le k\rbrace\exp(\theta (\hat\Phi^+)^2)\right]
= g,
\end{align}
where $g=g(r,k,\theta)$ is simply a notation for the RHS, for the ease of reference.

We leave the first term after inequality (\ref{eq:MainInequality}) untouched for now.
The second term after inequality (\ref{eq:MainInequality}) is
upper bounded by $g/4+c_{10}$ for some constant $c_{10}>0$,
by considering the cases $\hat\Phi<c$ and $\hat\Phi\ge c$ separately,
for a sufficiently large fixed $c>0$.
For the third term after inequality (\ref{eq:MainInequality}),
we use the fact that for some fixed $c_{26}>0$,
condition $\hat\Phi < 0$ implies $\sum_i\hat Z_i^+ \le c_{26} \sum_i\hat Z_i^-$,
and therefore, by Lemma~\ref{lem-bound-on-square}, 
this term is uniformly bounded for all $r$ and all sufficiently small $\theta>0$.

Next consider
\begin{align*}
c_7 \E\left[I\lbrace\hat\Phi\le k\rbrace\exp(\theta \hat\Phi^-)\exp(\theta \hat\Phi^2)\right].
\end{align*}
The expectation on the event
$\{\hat\Phi\le 0\}$ is bounded by
$c_7\E[\exp(\theta\hat\Phi^-)\exp(\theta(\hat\Phi^-)^2))]$ which, by Lemma~\ref{lem-bound-on-square}, is bounded by
 some $c_{11}>0$ and sufficiently small $\theta>0$, uniformly in $r$.
The expectation on the event
$\{\hat\Phi> 0\}$  is bounded by
$c_7\E\left[I\lbrace\hat\Phi\le k\rbrace\exp(\theta (\hat\Phi^+)^2)\right]$
which is
upper bounded by $g/4+c_{12}$ for some constant $c_{12}>0$, again
by considering the cases $\hat\Phi<c$ and $\hat\Phi\ge c$ separately,
for a sufficiently large fixed $c>0$.

Finally, for the last term in (\ref{eq:MainInequality})
\begin{align*}
c_8\theta \E\left[I\lbrace\hat\Phi\le k\rbrace\hat\Phi^2\exp(\theta \hat\Phi^-)\exp(\theta \hat\Phi^2)\right],
\end{align*}
the expectation on the
event $\{\hat\Phi<0\}$ is uniformly bounded by some $c_{13}\theta$,
for sufficiently small $\theta>0$,
by  applying Lemma~\ref{lem-bound-on-square}.
The expectation on the
event $\{\hat\Phi\ge 0\}$
is bounded by
\begin{align*}
c_8\theta \E\left[I\lbrace\hat\Phi^+\le k\rbrace(\hat\Phi^+)^2\exp(\theta (\hat\Phi^+)^2)\right]
\end{align*}
For sufficiently small $\theta>0$,
to ensure $c_8\theta<(1/2)c_1$, this term is upper bounded by $g/4$.
Thus we can rewrite (\ref{eq:MainInequality}) as
\begin{align}
\label{eq:MainInequality1}
c_{21} \E\left[(\Phi^+)^2  I\lbrace\hat\Phi\le k\rbrace\exp(\theta (\hat\Phi^+)^2)\right]
&\le c_{22} \E\left[\hat\Phi^+ I\lbrace\hat\Phi\le k\rbrace\exp(\theta (\hat\Phi^+)^2)\sum_i\hat Z_i^-\right]+c_{14},
\end{align}
for some positive $c_{21},c_{22},c_{14}$.
We analyze the term on the right-hand side. Fix $C>0$. Then
\begin{align*}
\hat\Phi^+ I\lbrace\hat\Phi\le k\rbrace\exp(\theta (\hat\Phi^+)^2)\sum_i\hat Z_i^-
&=\hat\Phi^+ I\lbrace C\sum_i\hat Z_i^-\le \hat\Phi\le  k\rbrace\exp(\theta (\hat\Phi^+)^2)\sum_i\hat Z_i^-\\
&+\hat\Phi^+ I\lbrace \hat\Phi <  \min(k,C\sum_i\hat Z_i^-)\rbrace\exp(\theta \hat\Phi^2)\sum_i\hat Z_i^-\\
&\le (1/C)(\hat\Phi^+)^2 I\lbrace\hat\Phi\le  k\rbrace\exp(\theta (\hat\Phi^+)^2)\\
&+C(\sum_i\hat Z_i^-)^2\exp(\theta C^2(\sum_i\hat Z_i^-)^2)\\
\end{align*}
The second summand is bounded in expectation (by Lemma~\ref{lem-bound-on-square})
for sufficiently small $\theta>0$.
Consider
 constant $C$ such that $c_{21}-c_{22}/C=c_{21}/2$ (that is $C=2 c_{22}/c_{21}$~).
By the choice of $C$,
if we apply the above estimates to the RHS of \eqn{eq:MainInequality1},
we obtain
\begin{align*}
(c_{21}/2)&\E\left[(\hat\Phi^+)^2 I\lbrace\hat\Phi\le  k\rbrace\exp(\theta (\hat\Phi^+)^2)\right]
\le c_{24}
\end{align*}
for some $c_{24}>0$.
We have identified a constant $c_{15}$ such that for every $k$
\begin{align*}
\limsup_{r\rightarrow\infty}\E\left[(\hat\Phi^+)^2 I\lbrace\hat\Phi\le  k\rbrace\exp(\theta (\hat\Phi^+)^2)\right]\le c_{15}<\infty.
\end{align*}
Since, again by Lemma~\ref{lem-bound-on-square},
for sufficiently small $\theta>0$,  we also have for some $c_{16}$
\begin{align*}
\limsup_{r\rightarrow\infty}\E\left[(\hat\Phi^-)^2 I\lbrace\hat\Phi\le  k\rbrace\exp(\theta (\hat\Phi^-)^2)\right]\le c_{16}<\infty,
\end{align*}
we obtain for $c_0\triangleq c_{15}+c_{16}$
\begin{align*}
\limsup_{r\rightarrow\infty}\E\left[\hat\Phi^2 I\lbrace\hat\Phi\le  k\rbrace\exp(\theta (\hat\Phi)^2)\right]\le c_0<\infty.
\end{align*}
This implies (\ref{eq-abandon})
and the proof of  Proposition~\ref{prop:abandon} is complete.
\end{proof}

We now complete the proof of Theorem~\ref{th-main-abandonments}(ii).
To show (\ref{eq-est333-for-plus}), we use the same argument
as the one for Corollary~\ref{coro:WeakLimitTight},
with $(\hat Z^+(\infty))^2$ and $(\hat Z^{r,+}(\infty))^2$ replacing
$|\hat Z(\infty)|$ and $|\hat Z^r(\infty)|$, respectively.

\bibliographystyle{amsalpha}
\bibliography{bibliography}

\newcommand{\etalchar}[1]{$^{#1}$}
\providecommand{\bysame}{\leavevmode\hbox to3em{\hrulefill}\thinspace}
\providecommand{\MR}{\relax\ifhmode\unskip\space\fi MR }
\providecommand{\MRhref}[2]{%
  \href{http://www.ams.org/mathscinet-getitem?mr=#1}{#2}
}
\providecommand{\href}[2]{#2}
\begin{thebibliography}{BGM{\etalchar{+}}05}

\bibitem[AAM07]{AksinArmonyMehrotra}
Z.~Aksin, M.~Armony, and V.~Mehrotra, \emph{The modern call-center: A
  multi-disciplinary perspective on operations management research}, Production
  and Operations Management, Special Issue on Service Operations in honor of
  John Buzacott (ed. G. Shanthikumar and D. Yao) \textbf{16} (2007), no.~6,
  655--688.

\bibitem[AG]{AtaGurvich}
B.~Ata and I.~Gurvich, \emph{On optimality gaps in the {H}alfin-{W}hitt
  regime}, Preprint.

\bibitem[AMR04a]{AtarMandelbaumReiman}
R.~Atar, A.~Mandelbaum, and M.~Reiman, \emph{A {B}rownian control problem for a
  simple queueing system in the {H}alfin-{W}hitt regime}, Systems and Control
  Letters \textbf{51} (2004), no.~3-4, 269--275.

\bibitem[AMR04b]{AtarMandelbaumReimanAAP}
\bysame, \emph{Scheduling a multi-class queue with many exponential servers},
  Ann. Appl. Prob. \textbf{14} (2004), 1084--1134.

\bibitem[BGM{\etalchar{+}}05]{BrownCallCenters}
L.~Brown, N.~Gans, A.~Mandelbaum, A.~Sakov, H.~Shen, S.~Zeltyn, and L.~Zhao.,
  \emph{Statistical analysis of a telephone call center: A queueing-science
  perspective}, Journal of the American Statistical Association \textbf{100}
  (2005), 36--50.

\bibitem[BL09]{BudhirajaLee}
A.~Budhiraja and C.~Lee, \emph{Stationary distribution convergence for
  generalized {J}ackson networks in heavy traffic}, Math. Oper. Res.
  \textbf{34} (2009), 45--56.

\bibitem[DH]{DaiHe}
J.~G. Dai and S.~He, \emph{Many-server queues with customer abandonment:
  numerical analysis of their diffusion models}, Prepint at
  http://arxiv.org/abs/1104.0347.

\bibitem[DT10]{TezcanDaiOR}
J.~G. Dai and T.~Tezcan, \emph{Dynamic control of n-systems with many servers:
  Asymptotic optimality of a static priority policy in heavy traffic},
  Operations Research \textbf{58} (2010), 94--110.

\bibitem[FPG08]{PieraMazumdarGuillemin}
R.~R.~Mazumdar F.~Piera and F.~Guillemin, \emph{Boundary behavior and
  product-form stationary distributions of jump-diffusions in the orthant with
  state dependent reflections}, Advances in Applied Probability \textbf{40}
  (2008), 529--547.

\bibitem[GG11]{GamarnikGoldbergGGN}
D.~Gamarnik and D.~Goldberg, \emph{Steady-state {GI/GI/n} queue in the
  {Halfin-Whitt} regime}, Preprint at http://arxiv.org/abs/1103.1709 (2011).

\bibitem[GM08]{GamarnikMomcilovicCallCenters}
D.~Gamarnik and P.~Mom\v{c}ilovi\'c, \emph{Steady-state analysis of a
  multi-server queue in the {H}alfin-{W}hitt regime}, Advances in Applied
  Probability \textbf{40} (2008), 548--577.

\bibitem[GMR02]{GarnettMandelbaumReiman}
O.~Garnett, A.~Mandelbaum, and M.~Reiman, \emph{Designing a call center with
  impatient customers}, Manufacturing and Service Operations Management
  \textbf{4} (2002), 208--227.

\bibitem[GZ06]{GamarnikZeevi}
D.~Gamarnik and A.~Zeevi, \emph{Validity of heavy traffic steady-state
  approximations in open queueing networks}, Ann. Appl. Prob. \textbf{16}
  (2006), no.~1, 56--90.

\bibitem[HW81]{HalfinWhitt81}
S.~Halfin and W.~Whitt, \emph{Heavy-traffic limits for queues with many
  exponential servers}, Operations Research \textbf{29} (1981), no.~3,
  567--588.

\bibitem[JMM04]{JelenkovicMandelbaumMomcilovic}
P.~Jelenkovi\'c, A.~Mandelbaum, and P.~Mom\v{c}ilovi\'c, \emph{Heavy traffic
  limits for queues with many deterministic servers}, Queueing Systems
  \textbf{47} (2004), 53--69.

\bibitem[Kat10]{Katsuda}
T.~Katsuda, \emph{State-space collapse in stationarity and its application to a
  multiclass single-server queue in heavy traffic}, Queueing Systems
  \textbf{65} (2010), 237–273.

\bibitem[KR]{KangRamanan}
W.~Kang and K.~Ramanan, \emph{Asymptotic approximations for the stationary
  distributions of many-server queues}, Preprint.

\bibitem[KR01]{KaspiRamanan}
H.~Kaspi and K.~Ramanan, \emph{Law of large numbers limits for many-server
  queues}, Ann. of Appl. Prob. \textbf{21} (2001), 33--114.

\bibitem[MM05]{MandelbaumMomcilovic}
A.~Mandelbaum and P.~Momcilovic, \emph{Queues with many servers: The virtual
  waiting-time process in the {QED} regime}, Preprint (2005).

\bibitem[MM09]{MandelbaumMomcilovicAbandonments}
\bysame, \emph{Queues with many servers and impatient customers}, Preprint
  (2009).

\bibitem[NGM03]{GansKooleMandelbaum}
G.~Koole N.~Gans and A.~Mandelbaum, \emph{Telephone call centers: Tutorial,
  review, and research prospects,}, Manufacturing \& Service Operations
  Management \textbf{5} (2003), 79--141.

\bibitem[Ree09]{Reed}
J.~E Reed, \emph{The {G/GI/N} queue in the {H}alfin-{W}hitt regime}, Annals of
  Applied Probability \textbf{19} (2009), 2211--2269.

\bibitem[RP00]{ReimanPuhalskii2000}
M.~I. Reiman and A.~A. Puhalskii, \emph{The multiclass gi/ph/n queue in the
  {H}alfin-{W}hitt regime}, Advances in Applied Probability \textbf{32} (2000),
  564--595.

\bibitem[ST10]{StolyarTezcan}
A.L. Stolyar and T.~Tezcan, \emph{Control of systems with flexible multi-server
  pools: A shadow routing approach}, Queueing Systems \textbf{66} (2010),
  1--51.

\end{thebibliography}

\end{document}